\newtheorem{theorem}{Theorem}[section]
\newtheorem{lemma}[theorem]{Lemma}
\newtheorem{corollary}[theorem]{Corollary}
\newtheorem{proposition}[theorem]{Proposition}
\newtheorem{example}[theorem]{Example}
\newtheorem{problem}[theorem]{Problem}
\newtheorem{claim}[theorem]{Claim}
\theoremstyle{definition}
\newtheorem{definition}[theorem]{Definition}
\numberwithin{equation}{section}
\newcommand{\e}{\varepsilon}
\newcommand{\w}{\omega}
\newcommand{\ID}{\mathbb{D}}
\newcommand{\IR}{\mathbb{R}}
\newcommand{\Tau}{\mathcal T}
\newcommand{\IF}{\mathbb{F}}
\newcommand{\TTT}{\mathcal{T}}
\newcommand{\F}{\mathcal{F}}
\newcommand{\KK}{\mathcal{K}}
\newcommand{\diam}{\mathsf{diam}}
\newcommand{\BNP}{\mathsf{BNP}}
\newcommand{\CC}{C_k}
\newcommand{\SM}{{\setminus}}
\title[The Gelfand--Phillips property for locally convex spaces]{The Gelfand--Phillips property for locally convex spaces}
\author{Taras Banakh and Saak Gabriyelyan}
\address{Ivan Franko National University of Lviv (Ukraine) and Jan Kochanowski University in Kielce (Poland)}
\email{t.o.banakh@gmail.com}
\address{Department of Mathematics, Ben-Gurion University of the Negev, Beer-Sheva, P.O. 653, Israel}
\email{saak@math.bgu.ac.il}
\subjclass[2010]{Primary 46A03; Secondary 46E10, 46E15}
\keywords{ Gelfand--Phillips property, Banach space, locally convex space, function space}
\begin{document}

\begin{abstract}We extend the well-known Gelfand--Phillips property for Banach spaces to locally convex spaces, defining a locally convex space $E$ to be Gelfand--Phillips if  every limited set in $E$ is precompact in the topology on $E$ defined by barrels.
Several characterizations of Gelfand--Phillips spaces are given.
The problem of preservation of the Gelfand-Phillips property by standard operations over locally convex spaces is considered. Also we explore the Gelfand--Phillips property in spaces $C(X)$ of continuous functions on a Tychonoff space $X$.
If $\tau$ and $\TTT$ are two  locally convex topologies on $C(X)$ such that $\TTT_p\subseteq \tau\subseteq \TTT\subseteq \TTT_k$, where $\TTT_p$ is the topology of pointwise convergence and $\TTT_k$ is the compact-open topology on $C(X)$, then the Gelfand--Phillips property of the function space $(C(X),\tau)$ implies the Gelfand--Phillips property of $(C(X),\TTT)$. If additionally $X$ is metrizable, then the function space $\big(C(X),\TTT\big)$  is Gelfand--Phillips.
\end{abstract}

\maketitle


\section{Introduction}


All locally convex spaces  are assumed to be Hausdorff, infinite-dimensional and over the field $\IF$ of real or complex numbers, and all topological spaces are assumed to be infinite and Tychonoff. We denote by $E'$ the topological dual of an lcs $E$. The dual space $E'$ of $E$ endowed with the weak$^\ast$ topology $\sigma(E',E)$ and the strong topology $\beta(E',E)$ is denoted by $E'_{w^\ast}$ and $E'_\beta$, respectively.  For a bounded subset $B\subseteq E$ and a functional $\chi\in E'$, we put
\[
\|\chi\|_B:= \sup\big\{ |\chi(x)|:x\in B\cup\{0\}\big\}.
\]

Let $E$ be a Banach space. The closed unit ball of $E$ is denoted by $B_E$. A bounded subset $B$ of $E$ is called {\em limited} if each weak$^\ast$ null sequence $\{ \chi_n\}_{n\in\w}$ in $E'$ converges uniformly on $B$, that is $\lim_{n\to\infty} \|\chi_n\|_B =0$.
A Banach space $E$ is said to have the {\em Gelfand--Phillips property} ({\em $(GP)$ property} for short) or is a {\em Gelfand--Phillips space} if every limited set in $E$ is precompact.

The classical result of  Gelfand \cite{Gelfand} states that  every separable Banach space is Gelfand--Phillips. On the other hand, Phillips \cite{Phillips} showed that the non-separable Banach space $\ell_\infty=C(\beta\w)$ is not Gelfand--Phillips, where $\beta\w$ is the Stone--\v{C}ech compactification of the discrete space $\w$ of nonnegative integers.

Following Bourgain and Diestel  \cite{BourDies}, a bounded linear operator $T$ from a Banach space $L$ into $E$ is called {\em limited} if $T(B_L)$ is a limited subset of $E$. 
In \cite{Drewnowski}, Drewnowski noticed the next characterization of Banach spaces with the $(GP)$ property.

\begin{theorem} \label{t:Drew-GP}
For a Banach space $E$ the following assertions are equivalent:
\begin{enumerate}
\item[{\rm (i)}] $E$ is Gelfand--Phillips;
\item[{\rm (ii)}] every limited weakly null sequence in $E$ is norm null;
\item[{\rm (iii)}] every limited operator with range in $E$ is compact.
\end{enumerate}
\end{theorem}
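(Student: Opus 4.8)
The plan is to establish the cycle (i) $\Ra$ (iii) $\Ra$ (ii) $\Ra$ (i), isolating the single genuinely nontrivial implication. First I would record two elementary facts used throughout. Since every weak$^\ast$ null sequence $\{\chi_n\}_{n\in\w}$ in $E'$ is norm bounded (Banach--Steinhaus, using completeness of $E$), and since the supremum of $|\chi_n|$ over $A\cup\{0\}$ equals its supremum over $\acx(A)$ and is unchanged by closure of the set on which a continuous functional is maximized, one gets $\|\chi_n\|_{\overline{\acx}(A)}=\|\chi_n\|_A$ for every bounded $A$. Hence \emph{the closed absolutely convex hull of a limited set is limited}, and in particular $A-A\subseteq 2\,\overline{\acx}(A)$ is limited whenever $A$ is. The second fact: a relatively norm compact weakly null sequence is norm null (any subsequence has a norm convergent sub-subsequence, whose limit must be the weak limit $0$, so the subsequence principle applies).

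The two easy implications then run as follows. For (i) $\Ra$ (iii): if $T\colon L\to E$ is limited, then $T(B_L)$ is limited, hence precompact by (i), hence relatively compact since $E$ is complete, so $T$ is compact. For (iii) $\Ra$ (ii): given a limited weakly null sequence $\{x_n\}_{n\in\w}$ (automatically bounded), define $T\colon\ell_1\to E$ by $Te_n=x_n$; then $T(B_{\ell_1})\subseteq\overline{\acx}\{x_n:n\in\w\}$ is limited by the first fact, so $T$ is a limited operator, hence compact by (iii); thus $\{x_n\}$ is relatively compact, and the second fact forces $\|x_n\|\to 0$.

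The implication (ii) $\Ra$ (i) is the heart of the matter. Assuming (ii), suppose toward a contradiction that some limited $A\subseteq E$ is not precompact. Then there are $\varepsilon>0$ and a sequence $\{x_n\}_{n\in\w}$ in $A$ with $\|x_n-x_m\|\ge\varepsilon$ for $n\ne m$; this sequence is limited. Here I would invoke the theorem of Bourgain and Diestel \cite{BourDies} that \emph{every limited set is conditionally weakly compact}, which yields a weakly Cauchy subsequence $\{x_{n_k}\}$. Setting $y_k:=x_{n_{2k}}-x_{n_{2k-1}}$, the sequence $\{y_k\}$ is weakly null, satisfies $\|y_k\|\ge\varepsilon$, and lies in the limited set $A-A$; so it is a limited weakly null sequence that is not norm null, contradicting (ii).

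The main obstacle is exactly this extraction of a weakly Cauchy subsequence from a separated sequence sitting inside a limited set: hypothesis (ii) alone produces no weak convergence, and the Bourgain--Diestel conditional weak compactness of limited sets (equivalently, that a limited set contains no subsequence equivalent to the unit vector basis of $\ell_1$, via Rosenthal's $\ell_1$ theorem) is what makes the argument go through. Were one to seek a self-contained proof, the entire difficulty would migrate to showing that an $\ell_1$-sequence cannot be limited, i.e. to manufacturing from the $\ell_1$-structure a genuinely weak$^\ast$ null sequence of functionals that fails to converge uniformly on it — the delicate point, since Hahn--Banach extensions of the $\ell_1$ coordinate functionals are not weak$^\ast$ null in general.
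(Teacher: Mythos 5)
Your proof is correct, and for the easy implications it coincides with what the paper does; but the decomposition and the key input for the hard step are genuinely different from the paper's own treatment. The paper does not prove Theorem \ref{t:Drew-GP} directly (it is attributed to Drewnowski); its own versions are the generalizations in Theorems \ref{t:GP-characterization} and \ref{t:GP-weak-null}. There, (i)$\Leftrightarrow$(iii) corresponds to (i)$\Leftrightarrow$(vi)$\Leftrightarrow$(vii) of Theorem \ref{t:GP-characterization}, and the converse direction is obtained not by passing through (ii) but by renorming: a limited set is turned into the unit ball of a normed space via the Minkowski functional of its closed absolutely convex hull, so that the identity inclusion is a limited operator to which (iii) applies. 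For the crucial step --- extracting from a norm-separated sequence in a limited set a limited weakly null sequence that is not norm null --- both you and the paper rest on Rosenthal's $\ell_1$-theorem and use the same consecutive-differences trick on a weakly Cauchy subsequence (the paper's Claim \ref{cl:no-Cauchy}). The difference is how the $\ell_1$-alternative is excluded: you outsource it wholesale to the Bourgain--Diestel theorem \cite{BourDies} that limited sets are conditionally weakly compact, whereas the paper keeps Rosenthal's dichotomy explicit and, in the $\ell_1$-case, invokes \cite{BB} to build a continuous operator into $c_0$ that is non-compact on the $\ell_1$-copy; the coordinate functionals of $c_0$ composed with that operator then furnish the weak$^\ast$ null sequence witnessing that the set is not limited. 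Your citation buys brevity; the paper's construction buys an argument that survives the passage to general locally convex spaces, where the Bourgain--Diestel statement is not available off the shelf. You correctly diagnose that the entire difficulty is showing that an $\ell_1$-basic sequence cannot be limited --- the $c_0$-valued non-compact operator of \cite{BB} is exactly the paper's answer to that. Your preliminary facts (the closed absolutely convex hull of a limited set is limited; a relatively compact weakly null sequence is norm null) and the steps (i)$\Rightarrow$(iii) and (iii)$\Rightarrow$(ii) via the operator $\ell_1\to E$, $e_n\mapsto x_n$, are all sound.
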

\noindent This characterization of Gelfand--Phillips Banach spaces plays a crucial role in many arguments for establishing the $(GP)$  property in Banach spaces. 
The Gelfand--Phillips property was intensively studied in particular in \cite{CGP,Drewnowski,DrewEm,Schlumprecht-C}. It follows from results of Schlumprecht \cite{Schlumprecht-Ph,Schlumprecht-C} that the $(GP)$ property is not a three space property (see also Theorem 6.8.h in \cite{CG}).
In our recent paper \cite{BG-GP-Banach}, we give several new characterizations of  Gelfand--Phillips Banach spaces.

Another direction for studying the  Gelfand--Phillips property is to characterize Gelfand--Phillips spaces that belong to some important classes of Banach spaces. In the next proposition, whose short proof is given in Corollary~2.2 of \cite{BG-GP-Banach}, we provide some of the most important and general results in this direction (for all relevant definitions see Section \ref{sec:GP}).

\begin{theorem} \label{t:Banach-GP}
A Banach space $E$ is Gelfand--Phillips if  one of the following conditions holds:
\begin{enumerate}
\item[{\rm (i)}] {\rm(\cite[Cor.~2.2]{BG-GP-Banach})} the closed unit ball  $B_{E'}$ of the dual space $E'$ endowed with the weak$^\ast$ topology is selectively sequentially pseudocompact;
\item[{\rm (ii)}]  {\rm(\cite{Gelfand})} $E$ is separable;
\item[{\rm (iii)}] {\rm(\cite[Prop.~2]{CGP})} $E$ is  separably weak$^\ast$-extensible \textup{(}$\Leftrightarrow E$  has the separable $c_0$-extension property$)$;
\item[{\rm (iv)}]  {\rm(cf. \cite[Th.~2.2]{Drewnowski} and \cite[Prop.~2]{Schlumprecht-C})} the space $E'_{w^\ast}$ is selectively sequentially pseudocompact at some $E$-norming set $S\subseteq E'$;
\item[{\rm(v)}]  {\rm(\cite[Th.~4.1]{DrewEm})} $E=C(K)$ for some  compact selectively sequentially pseudocompact space $K$.
\end{enumerate}
\end{theorem}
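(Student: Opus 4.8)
The plan is to treat item~(iv) as the master condition and reduce all the others to it, proving (iv) directly via Drewnowski's criterion in Theorem~\ref{t:Drew-GP}. Thus two things must be established: first, that (iv) alone implies the $(GP)$ property; and second, that each of (i), (ii), (iii), (v) implies (iv). By Theorem~\ref{t:Drew-GP}, for the first step it suffices to show that every limited weakly null sequence $(x_n)_{n\in\w}$ in $E$ is norm null.

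For that core implication, suppose $E'_{w^\ast}$ is selectively sequentially pseudocompact at an $E$-norming set $S\subseteq E'$, and assume toward a contradiction that some limited weakly null sequence $(x_n)_{n\in\w}$ is not norm null. After passing to a subsequence I may fix $\e>0$ with $\|x_n\|\ge\e$ for all $n$. Since $S$ is $E$-norming, the weak$^\ast$-open sets $U_n:=\{\chi\in E':|\chi(x_n)|>\e/2\}$ are nonempty and meet $S$. Applying selective sequential pseudocompactness at $S$, I select functionals $\psi_n\in U_n$ admitting a weak$^\ast$-convergent subsequence $\psi_{n_k}\to\psi$, and set $\theta_k:=\psi_{n_k}-\psi$, which is weak$^\ast$ null. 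Because $(x_n)$ is weakly null we have $\psi(x_{n_k})\to 0$, so
\[
|\theta_k(x_{n_k})|\ge |\psi_{n_k}(x_{n_k})|-|\psi(x_{n_k})|>\tfrac{\e}{2}-o(1),
\]
which stays bounded away from $0$. On the other hand, limitedness of $\{x_n:n\in\w\}$ forces $\|\theta_k\|_{\{x_n\}}\to 0$ and hence $\theta_k(x_{n_k})\to 0$, a contradiction. This shows that (iv) implies the $(GP)$ property.

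It then remains to feed the remaining hypotheses into (iv). For (i), the set $S:=B_{E'}$ is $E$-norming, and selective sequential pseudocompactness of the compact space $B_{E'}$ yields that of $E'_{w^\ast}$ at this $S$ (the sets $U_n$ above meet $B_{E'}$ precisely because $\|x_n\|=\sup_{\chi\in B_{E'}}|\chi(x_n)|>\e/2$). For (ii), separability of $E$ makes $B_{E'}$ weak$^\ast$-metrizable and compact, hence sequentially compact and a fortiori selectively sequentially pseudocompact, reducing (ii) to (i). For (v), writing $E=C(K)$, the point evaluations $\{\delta_x:x\in K\}$ form an $E$-norming subset of $B_{E'}$ that is weak$^\ast$-homeomorphic to $K$, so selective sequential pseudocompactness of $K$ transfers to this norming set and gives (iv). For (iii), I would unwind the definition of separable weak$^\ast$-extensibility (equivalently, the separable $c_0$-extension property): restricting a candidate sequence $(\chi_n)$ to the separable subspace $\overline{\spn}\{x_n:n\in\w\}$ yields a weak$^\ast$-convergent subsequence there, and the extension property promotes its limit to a weak$^\ast$ limit in $E'$, putting us back in the setting of (iv).

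The main obstacle, and the only place where genuine care is needed, is matching the precise definition of ``selectively sequentially pseudocompact at a set $S$'' in the reductions (i), (iii), (v): one must verify that the open sets $U_n$ produced from a norming set really meet $S$ in the prescribed way and that the selected functionals retain a weak$^\ast$-convergent subsequence whose limit lies where the limitedness estimate requires. The display itself is routine; the conceptual content is entirely in checking that each special structure supplies the abstract selection guaranteed by (iv).
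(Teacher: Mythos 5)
The paper itself offers no proof of this theorem: it is a survey of known sufficient conditions, each item carrying its own citation, so your blind attempt can only be judged on its own merits. Your master reduction is sound for (i), (ii), (iv) and (v). The core argument --- pick norming functionals $\psi_n$ with $|\psi_n(x_n)|$ bounded below, use selective sequential pseudocompactness at $S$ to extract a weak$^\ast$-convergent subsequence $\psi_{n_k}\to\psi$, and play the weak$^\ast$ null sequence $\theta_k=\psi_{n_k}-\psi$ against the limitedness of $\{x_n\}_{n\in\w}$ --- is exactly the classical Drewnowski--Schlumprecht argument behind the cited references, and the reductions of (i), (ii) and (v) to (iv) (via $B_{E'}$, its weak$^\ast$ metrizability in the separable case, and $\delta(K)$ respectively) are correct. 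The only blemish there is quantitative: if the norming constant of $S$ is $c<1$, the sets $U_n$ must be defined with threshold $c\e/2$ rather than $\e/2$, or they need not meet $S$; you flag this yourself and it is trivially repaired.

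The genuine gap is item (iii). Separable weak$^\ast$-extensibility does \emph{not} reduce to (iv): it provides no $E$-norming set $S\subseteq E'$ at which $E'_{w^\ast}$ is selectively sequentially pseudocompact, and your phrase ``the extension property promotes its limit to a weak$^\ast$ limit in $E'$'' misstates what the hypothesis gives. What it actually gives is that a weak$^\ast$ null sequence in $Y'$, for $Y\subseteq E$ separable, admits a subsequence that extends to a weak$^\ast$ null sequence in $E'$. The correct argument is therefore parallel to, but not a case of, (iv): given a limited weakly null sequence with $\|x_n\|\ge\e$, set $Y=\overline{\mathrm{span}}\{x_n:n\in\w\}$, choose $f_n\in B_{Y'}$ with $f_n(x_n)=\|x_n\|$, use weak$^\ast$ sequential compactness of $B_{Y'}$ (here separability of $Y$ is essential) to get $f_{n_k}\to f$, form the weak$^\ast$ null sequence $g_k=f_{n_k}-f$ in $Y'$ with $|g_k(x_{n_k})|\ge\e-o(1)$, and only then invoke the extension property to lift a subsequence of $(g_k)$ to a weak$^\ast$ null sequence in $E'$, contradicting limitedness. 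Without this extra step your treatment of (iii) does not close. Note also that the paper's own later generalizations (Corollaries \ref{c:GP-sufficient} and \ref{c:barrelled-exten-GP}) take a different route entirely, through Theorem \ref{t:GP-characterization}(v) and the $C_p^0(\w)$-extension property, which is what makes them work beyond the Banach setting; your more elementary route is fine for the Banach case but would not generalize.
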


The aforementioned results and discussion suggest the problem of defining and studying the Gelfand--Phillips property in the class of all locally convex spaces which  is the main purpose of the article. To this end, instead of the original topology on a locally convex space $E$ we consider the topology $\beta(E,E')$ on $E$ whose neighborhood base at zero consists of barrels (for more details about this topology, see \S~8.4 of \cite{Jar}). We shall say that a subset $A$ of $E$ is {\em barrel-bounded}  or {\em barrel-precompact} if $A$ is  bounded or, respectively, precompact in the space $\big(E,\beta(E,E')\big)$. 

\begin{definition} \label{def:limited-lcs}
A barrel-bounded subset $A$ of a locally convex space $E$ is called {\em limited} if every weak$^\ast$ null sequence $\{\chi_n\}_{n\in\w}$ in $E'$ converges uniformly on $A$, that is,
$
\lim_{n\to\infty} \|\chi_n\|_A=0.
$ \qed
\end{definition}

It is clear that if $E$ is a Banach space, then a subset $A$ of $E$ is limited if and only if it is limited in the usual sense. Now Gelfand--Phillips spaces are defined in a very natural way as follows.

\begin{definition} \label{def:GP-lcs}
A locally convex space $E$ is said to have {\em the Gelfand--Phillips property} or else $E$ is a {\em Gelfand--Phillips space} if every limited subset of $E$ is barrel-precompact.\qed
\end{definition}
It is easy to see that a Banach space $E$ has the Gelfand--Phillips property if and only if it is Gelfand--Phillips in the sense of Definition \ref{def:GP-lcs}.
\smallskip

Now we describe the content of the article.
In Section \ref{sec:GP} we give several characterizations of Gelfand--Phillips spaces generalizing and extending Theorem \ref{t:Drew-GP} and characterizations of Gelfand--Phillips Banach spaces obtained in \cite{BG-GP-Banach}, see Theorems \ref{t:GP-characterization} and \ref{t:GP-weak-null}.
In Corollary \ref{c:GP-sufficient} we generalize (i) and (ii) of Theorem \ref{t:Banach-GP}, and Corollary  \ref{c:barrelled-exten-GP} generalizes (iii) of Theorem \ref{t:Banach-GP}. In Theorem \ref{t:not-eJNP} we give a sufficient condition on a locally convex space implying the failure of the $(GP)$ property.

In Section \ref{sec:GP-f} we study the  Gelfand--Phillips  property in function spaces $C(X)$ endowed with different locally convex topologies $\TTT$, but mainly  we consider the pointwise topology $\TTT_p$ and the compact-open topology $\TTT_k$. In Corollary \ref{c:GP-sspC} of Theorem \ref{t:Cp-GP}, we complement (v)  of Theorem \ref{t:Banach-GP} by showing  that
for every selectively sequentially pseudocompact space $K$, the function space $C_p(X)$ is Gelfand--Phillips. In Theorem \ref{t:GP-between} we show that for every Tychonoff space $X$,  if $C_p(X)$ is Gelfand--Phillips then so is $\CC(X)$. We prove that for every metrizable space $X$, the function spaces $C_p(X) $ and $\CC(X)$ are Gelfand--Phillips, see Corollary \ref{c:mu-Cp-Ck-GP}.



\section{A characterization of Gelfand--Phillips locally convex spaces} \label{sec:GP}


In this section we characterize locally convex spaces with the  Gelfand--Phillips  property. But first we recall some basic definitions.

Let $E$ be a locally convex space (lcs for short). We denote  by $E_\beta$ the space $E$ endowed with the locally convex topology $\beta(E,E')$ whose neighborhood base at zero consists of barrels.

A subset $A$ of an lcs $E$ is called {\em precompact} if for every neighborhood $U$ of zero there is a finite subset $F\subseteq E$ (one can take $F\subseteq A$) such that $A\subseteq F+U$. Therefore $A$ is barrel-precompact if it is a precompact subset of $E_\beta$. We say that a subset $B\subseteq E$ is {\em barrel-bounded} if it is a bounded subset of $E_\beta$, that is, if for any barrel $U\subseteq E$ there is an $n\in\w$ such that $B\subseteq nU$. A subset $A$ of $E$ is defined to be {\em barrel-separated} if there exists a barrel $B\subseteq E$ such that $A$ is {\em $B$-separated} in the sense that $a-a'\notin B$ for any distinct elements $a,a'\in A$.
It is easy to see that a subset $A\subseteq B$ is {\em not} barrel-precompact if and only if it contains an infinite barrel-separated subset. The {\em polar} $D^\circ$ of a set $D\subseteq E$ is the set
\[
D^\circ:=\{ \chi\in E': \|\chi\|_D\leq 1\}.
\]

To characterize locally convex spaces with the $(GP)$ property, we need the following topological notions. Let $A$ be a subset of a topological space $X$.
Then $A$ is defined to be {\em relatively sequentially compact} in $X$ if every sequence $\{x_n\}_{n\in\w}\subseteq A$ contains a subsequence $\{x_{n_k}\}_{k\in\w}$ that converges in $X$. Following \cite{BG-GP-Banach}, we define  the space $X$ to be {\em selectively sequentially pseudocompact} at $A$ if for any open sets $U_n\subseteq X$, $n\in\w$, intersecting the set $A$, there exists a sequence $(x_n)_{n\in\w}\in\prod_{n\in\w}U_n$ containing a subsequence $(x_{n_k})_{k\in\w}$ that converges in $X$.
It is clear that if $A$ is a relatively sequentially compact subset of $X$, then $X$ is selectively sequentially pseudocompact at $A$.

A  topological vector space  $X$ is defined to be  {\em selectively sequentially precompact} at $A$ if for any open sets $U_n\subseteq X$, $n\in\w$, intersecting the set $A$ there exists a sequence $(x_n)_{n\in\w}\in\prod_{n\in\w}U_n$ containing a Cauchy subsequence. We recall that a sequence $(x_n)_{n\in\w}$ in $X$ is {\em Cauchy} if for any neighborhood $U\subseteq X$ of zero  there exists $n\in\w$ such that $x_m-x_k\in U$ for every $m,k\ge n$.
It is clear that if $X$ is selectively sequentially pseudocompact at $A$, the $X$ is also selectively sequentially precompact at $A$.

Analogously to limited and compact operators between Banach spaces we define
\begin{definition} \label{def:limited-oper}
An operator $T:X\to Y$ between locally convex spaces is called
\begin{enumerate}
\item[{\rm (i)}] {\em limited} if  for every barrel-bounded subset of $X$, the image $T(B)$ is a limited subset of $Y$;
\item[{\rm (ii)}]   {\em barrel-precompact} if  for every barrel-bounded subset $B$ of $X$, the image $T(B)$ is barrel-precompact;
\item[{\rm (iii)}] {\em $\beta$-to-$\beta$ precompact} if the operator $T: X_\beta \to Y$ is barrel-precompact.
\end{enumerate}
\end{definition}

 The following characterization of locally convex spaces with the $(GP)$ property is the main result of this section.

\begin{theorem} \label{t:GP-characterization}
For a locally convex space $E$ the following assertions are equivalent:
\begin{enumerate}
\item[{\rm (i)}] $E$ has the Gelfand--Phillips property; 
\item[{\rm (ii)}] For every barrel-bounded set $B\subseteq E$ which is not  barrel-precompact, there is a weak$^\ast$ null sequence $\{\chi_n\}_{n\in\w}$ in $E'$ such that $\|\chi_n\|_B \not\to 0$.
\item[{\rm (iii)}] For any infinite barrel-bounded, barrel-separated subset $D$ of $E$ and every $\delta>0$ there exist a sequence $\{x_n\}_{n\in\w}$ in $D$ and a null sequence $\{f_n\}_{n\in\w}$ in $E'_{w^\ast}$ such that $|f_n(x_k)|<\delta$ and $|f_n(x_n)|>1+\delta$ for all natural numbers $k<n$.
\item[{\rm (iv)}] For any infinite barrel-bounded barrel-separated set $D$ in $E$ there exists a continuous operator $T:E\to C_p^0(\w)$ such that $T(D)$ is not  precompact in the Banach space $c_0$.
\item[{\rm (v)}] For each infinite barrel-bounded barrel-separated  subset $D$ of $E$ there exist an infinite   subset $X \subseteq D$ and a subset $S\subseteq E'_{w^\ast}$ such that $E'_{w^\ast}$ is selectively sequentially precompact at $S$  and $\sup_{s\in S}|s(x-x')|>1$ for every distinct $x,x'\in X$.
\item[{\rm (vi)}] Every limited operator $T:L\to E$ from an lcs space $L$ to $E$ is barrel-precompact.
\item[{\rm (vii)}] Every limited operator $T:L\to E$ from a normed space $L$ to $E$ is barrel-precompact.
\end{enumerate}
\end{theorem}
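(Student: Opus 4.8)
The plan is to establish all seven equivalences through two interlocking cycles: a \emph{witness} cycle $(i)\Rightarrow(ii)\Rightarrow(iii)\Rightarrow(iv)\Rightarrow(v)\Rightarrow(i)$ and an \emph{operator} cycle $(i)\Rightarrow(vi)\Rightarrow(vii)\Rightarrow(i)$. The equivalence $(i)\Leftrightarrow(ii)$ is purely definitional: a limited set is by definition barrel-bounded, so $(i)$ asserts exactly that every barrel-bounded set on which all weak$^\ast$ null sequences converge uniformly is barrel-precompact; taking the contrapositive and recalling that a barrel-bounded set fails to be limited precisely when some weak$^\ast$ null sequence fails to converge uniformly on it yields $(ii)$. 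I would record at the outset two elementary facts used throughout: an infinite barrel-separated set is never barrel-precompact (it contains itself as an infinite barrel-separated subset), and for a linear functional the supremum over the absolutely convex hull equals that over the generating set.

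For $(ii)\Rightarrow(iii)$ I would run a sliding-hump argument. Given an infinite barrel-bounded barrel-separated $D$, it is not barrel-precompact, so $(ii)$ supplies a weak$^\ast$ null $\{\chi_n\}$ with $\|\chi_n\|_D\not\to0$; after passing to a subsequence and rescaling by a fixed $\lambda$ I may assume $\lambda\|\chi_n\|_D>1+\delta$ for all $n$. Then choose inductively indices $n_1<n_2<\cdots$ and points $x_j\in D$: having fixed $x_1,\dots,x_{j-1}$, use weak$^\ast$ nullity to pick $n_j$ so large that $|\lambda\chi_{n_j}(x_i)|<\delta$ for all $i<j$, and pick $x_j\in D$ with $|\lambda\chi_{n_j}(x_j)|>1+\delta$; set $f_j:=\lambda\chi_{n_j}$. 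For $(iii)\Rightarrow(iv)$, apply $(iii)$ with $\delta=\tfrac12$ and define $T\colon E\to C^0_p(\w)$ by $Tx:=(f_n(x))_{n\in\w}$; each coordinate is continuous and $Tx\in c_0$ by weak$^\ast$ nullity, so $T$ is continuous, while for $m<n$ the $n$-th coordinate gives $\|Tx_n-Tx_m\|_\infty\ge|f_n(x_n)|-|f_n(x_m)|>1$, so $T(D)$ contains a $1$-separated sequence and is not precompact in $c_0$. For $(iv)\Rightarrow(v)$, let $s_n:=e_n\circ T\in E'$ be the coordinate functionals; they are weak$^\ast$ null since $Tx\in c_0$, and $\sup_n|s_n(x-x')|=\|Tx-Tx'\|_\infty$. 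As $T(D)$ is not precompact there are $\e>0$ and an infinite $X\subseteq D$ with $\|Tx-Tx'\|_\infty\ge\e$ for distinct $x,x'\in X$; put $S:=\{(2/\e)s_n:n\in\w\}$. Then $\sup_{s\in S}|s(x-x')|\ge2>1$ on $X$, and $S$ is a scalar multiple of a weak$^\ast$ null sequence, hence relatively sequentially compact in $E'_{w^\ast}$, so $E'_{w^\ast}$ is selectively sequentially pseudocompact, and therefore selectively sequentially precompact, at $S$.

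The operator cycle is lighter. $(i)\Rightarrow(vi)$ is immediate: if $T\colon L\to E$ is limited then $T(B)$ is limited for every barrel-bounded $B$, hence barrel-precompact by $(i)$; and $(vi)\Rightarrow(vii)$ is the restriction to normed domains. For $(vii)\Rightarrow(i)$ I would realise an arbitrary limited $A\subseteq E$ as an operator image: the absolutely convex hull $\acx(A)$ is again limited (equal suprema over hull and generators) and barrel-bounded, hence bounded and line-free in the Hausdorff space $E_\beta$, so its Minkowski gauge is a norm on $L:=\spn(A)$. The inclusion $j\colon L\to E$ is continuous because $\acx(A)$ is bounded in $E$, and it is limited because $j$ maps the closed unit ball $B_L$ into the limited set $\overline{\acx(A)}$; thus $(vii)$ makes $j$ barrel-precompact, and since $A\subseteq j(B_L)$, the set $A$ is barrel-precompact.

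The main obstacle is closing the witness cycle, $(v)\Rightarrow(i)$, where only selective sequential precompactness (Cauchy selections), not sequential compactness (convergent selections), is available. I would argue by contradiction: if some limited $A$ is not barrel-precompact, extract an infinite barrel-separated $D\subseteq A$ and apply $(v)$ to obtain $X\subseteq D$ and $S$; the extra leverage is that $X\subseteq A$ is now \emph{limited}. Two lemmas drive the contradiction. First, in $\ell_\infty(X)$ a set cannot separate infinitely many coordinates by more than $1$ and be totally bounded (a finite $\tfrac14$-net would force two coordinates to be uniformly $\tfrac12$-close across the net, hence $1$-close across the whole set), so the restriction image $\{(s(x))_{x\in X}:s\in S\}$ is \emph{not} precompact in $\ell_\infty(X)$. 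Second, limitedness of $X$ forces every weak$^\ast$ Cauchy sequence in $E'$ to be uniformly Cauchy on $X$ (otherwise a difference subsequence would be weak$^\ast$ null yet bounded away from $0$ on $X$). The plan is then to feed selective sequential precompactness a family of weak$^\ast$-open sets, built diagonally from $X$ and $S$ so that \emph{every} admissible selection has $\ell_\infty(X)$-separated restrictions, and derive a contradiction from the guaranteed Cauchy (hence, by the second lemma, uniformly Cauchy) subsequence. The delicate point, and the crux of the whole theorem, is arranging that each of these open sets still meets $S$: this is exactly where the pairwise separation $\sup_{s\in S}|s(x-x')|>1$ must be spent, and it is the step I expect to require the most care.
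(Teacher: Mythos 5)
Most of your argument tracks the paper's proof closely: (i)$\Leftrightarrow$(ii) is definitional, your sliding hump for (ii)$\Rightarrow$(iii), the operator $T(x)=(f_n(x))_{n\in\w}$ for (iii)$\Rightarrow$(iv), and the gauge-norm realisation of a limited set as an operator image for (vii)$\Rightarrow$(i) are essentially the arguments in the paper. (The paper arranges the witness cycle differently: it closes the main loop with the easy implication (iv)$\Rightarrow$(i) via the coordinate functionals $e'_n\circ T$, and attaches (v) by a separate sub-loop (iii)$\Rightarrow$(v)$\Rightarrow$(ii); you instead place (v) inside the main cycle.) The genuine gap is (v)$\Rightarrow$(i): you describe a strategy and explicitly defer ``the step I expect to require the most care,'' but that step is the entire content of the implication, and your two lemmas do not combine to close it. If a selection $(g_k)$ with $g_k\in W_{i(k)}$ is Cauchy, your second lemma makes it uniformly Cauchy on $X$, hence uniformly convergent on $X$ to some $g_\infty$; but then $g_\infty$ simply inherits the oscillation $|g_\infty(x_{n_{i(k)}})-g_\infty(x_{m_{i(k)}})|\ge 1-\e$ on infinitely many pairs, which is no contradiction. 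What is missing is a mechanism forcing the limit to have \emph{small} oscillation at the relevant points.

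The paper supplies exactly that mechanism, and it is the nontrivial core of the theorem: selective sequential precompactness at $S$ forces $S$ to be bounded, so the polar $S^\circ$ is a barrel and the restrictions $f_{n,m}{\restriction}_X$ of witnesses $f_{n,m}\in S$ with $|f_{n,m}(x_m-x_n)|>1$ all live in a compact metrizable cube $\ID^X$; a double diagonal extraction then yields a cluster function $f_\infty$ with $\diam\{f_\infty(x_n)\}_{n\in\Omega}<\tfrac14$, together with pairs $(n_i,m_i)$ and neighbourhoods $V_i\ni f_{n_i,m_i}{\restriction}_X$ converging to $f_\infty$ while preserving $|g(x_{n_i})-g(x_{m_i})|>1$ for all $g\in V_i$; the open sets $W_i=\{f\in E'_{w^\ast}:f{\restriction}_X\in V_i\}$ meet $S$ because they contain $f_{n_i,m_i}$; and the contradiction comes not from the Cauchy selection $(g_k)$ alone but from the differences $\mu_k=g_k-g_{j_k}$, which are weak$^\ast$ null yet satisfy $|\mu_k(x_{n_{i(k)}})-\mu_k(x_{m_{i(k)}})|>\tfrac12$ precisely because $g_{j_k}$ is already $\tfrac18$-close to the small-oscillation function $f_\infty$ at the two relevant points. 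None of this is recoverable from your sketch as written. Moreover, since you route the entire witness cycle through (v)$\Rightarrow$(i), the missing step also leaves (ii)$\Rightarrow$(i), (iii)$\Rightarrow$(i) and (iv)$\Rightarrow$(i) unestablished; the last of these has a short direct proof (pull back the coordinate functionals of $c_0$ along $T$), so you should at least close the (i)--(iv) cycle that way and treat (v) separately.
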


\begin{proof}
(i)$\Rightarrow$(ii) Fix any barrel-bounded set $B\subseteq E$ which is not barrel-precompact. By (i), $E$ is Gelfand--Phillips and hence $B$ is not limited. Therefore there exists a weak$^\ast$ null sequence $\{\chi_n\}_{n\in\w}$ in $E'$ such that $\|\chi_n\|_B\not\to 0$, as desired.
\smallskip

(ii)$\Rightarrow$(iii) Fix any  $\delta>0$ and any infinite barrel-bounded barrel-separated set $D$ in  $E$. Find a  barrel $B\subseteq E$ such that the set $D$ is  $B$-separated. Observe that $D$ is not barrel-precompact because $D$ is $B$-separated and infinite. By (ii), there is a null sequence $\{g_n\}_{n\in\w}$ in $E'_{{w^\ast}}$ such that $\|g_n\|_D \not\to 0$. Passing to a subsequence and multiplying each functional $g_n$ by an appropriate constant if needed, we can assume that
\begin{equation} \label{equ:eJNP-2}
\|g_n\|_D = \sup_{d\in D}|g_n(d)| >1+\delta \quad \mbox{ for every }\; n\in\w.
\end{equation}
Now, choose arbitrarily $x_0\in D$ such that $|g_0(x_0)|>1+\delta$. Assume that, for $k\in\w$, we found $x_0,\dots,x_k\in D$ and a sequence $0=n_0 <n_1<\cdots<n_k$ of natural numbers such that
\[
|g_{n_j}(x_i)|<\delta \; \mbox{ and } \; |g_{n_j}(x_j)| >1+\delta \quad \mbox{ for every } \; 0\leq i<j\le k.
\]
Since $g_n\to 0$ in the topology $\sigma(E',E)$, (\ref{equ:eJNP-2}) implies that there are $n_{k+1}>n_k$ and $x_{k+1} \in D\SM \{x_0,\dots,x_k\}$ such that  $|g_{n_{k+1}}(x_{k+1})| >1+\delta$ and $|g_{n_{k+1}}(x_i)|<\delta$ for every $i\le k$. For every $k\in\w$, put $f_k:=g_{n_k}$, and observe that $f_k\to 0$ in  $E'_{w^\ast}$ and
\[
|f_k(x_i)|=|g_{n_k}(x_i)|<\delta \; \mbox{ and }\; |f_k(x_k)|=|g_{n_k}(x_k)|>1+\delta
\]
for any numbers $i<k$, as desired.
\smallskip

(iii)$\Rightarrow$(iv)  Let $D$ be any barrel-bounded barrel-separated set in $E$. By (iii), for $\delta=\tfrac{1}{2}$ there exist a sequence $\{x_n\}_{n\in\w}$ in $D$ and a null sequence $\{\chi_n\}_{n\in\w}$ in $E'_{{w^\ast}}$ such that $|\chi_m(x_n)|<\frac{1}{2}$ and $|\chi_m(x_m)|>\frac{3}{2}$ for all $n<m$. Then $|\chi_m(x_m)-\chi_m(x_n)|\ge |\chi_m(x_m)|-|\chi_m(x_n)|>\frac{3}{2}-\frac{1}{2}=1$ for every $n<m$. It follows that the operator
\[
T: E\to  C_p^0(\w), \quad T(x):= \big(\chi_n(x)\big)_{n\in\w},
\]
is well-defined and continuous. Since $\|T(x_n)-T(x_m)\|_{\infty}\ge |\chi_m(x_m)-\chi_m(x_n)|>1$ for every $n<m$, the set $T(D)\supseteq\{T(x_n)\}_{n\in\w}$ is not precompact in $c_0$.
\smallskip

(iv)$\Rightarrow$(i) Fix a barrel-bounded subset $P\subseteq E$, which is not barrel-precompact. Then there exists a barrel $B\subseteq E$ such that $L\not\subseteq F+B$ for any finite subset $F\subseteq E$. For every $n\in\w$, choose inductively a point $z_n\in P$ so that $z_n\notin \bigcup_{k<n}(z_k+B)$. Observe that the  set  $D=\{z_n:n\in\w\}$ is infinite, barrel-bounded and barrel-separated. By (iv), there exists a continuous operator $T:E\to C_p^0(\w)$ such that the set $T(D)$ is not precompact in the Banach space $c_0$.
 Since $c_0$ is a Banach space, there exist a sequence $\{ a_n\}_{n\in\w}$ in $D$ and $\delta>0$ such that  $\| T(a_n)-T(a_m)\|_{c_0}\geq \delta$ for all distinct $n,m\in\w$.
Observe that the sequence $\{ T(a_n)\}_{n\in\w}$ is bounded in the Banach space $c_0$. Therefore there are two sequences $0\leq n_0<n_1<\cdots$ and $0\leq m_0< m_1<\cdots$ of natural numbers such that
\[
\big| e'_{m_k}\big( T(a_{n_k})\big)\big| >\tfrac{\delta}{2} \quad \mbox{ for every }\; k\in\w,
\]
where $e'_n: C^0_p(\w)\to \IF$ is the $n$th coordinate functional. For every $k\in\w$, set  $f_k :=e'_{m_k} \circ T$. It follows that $\{f_k\}_{k\in\w}$ is a null sequence in $E'_{w^\ast}$ and
\[
\|f_k\|_P =\sup_{x\in P} |f_k(x)| \geq |f_k (a_{n_k})|>\tfrac{\delta}{2}
\]
for every $ k\in\w$, witnessing that the set $P$ is not limited.
\smallskip

(iii)$\Rightarrow$(v) By (iii), for any infinite barrel-bounded, barrel-separated set $X\subseteq E$ there exists a null sequence $S=\{s_n\}_{n\in\w}\subseteq E'_{w^\ast}$ and a subsequence $X_0=\{x_n\}_{n\in\w}\subseteq X$ such that $|s_n(x_n)|>2$ and $|s_n(x_k)|<1$ for all $k<n$. Taking into account that $S$ is a null sequence in $E'_{w^\ast}$, we conclude that $S$ is relatively sequentially compact in $E'_{w^\ast}$ and hence $E'_{w^\ast}$ is selectively sequentially precompact at $S$. Finally, observe that for any $k<n$
\[
\sup_{s\in S}|s(x_n-x_k)|\ge |s_n(x_n)-s_n(x_k)|>2-1=1.
\]
\smallskip

(v)$\Rightarrow$(ii)  Fix any barrel-bounded subset $P\subseteq E$, which is not barrel-precompact. Then there exists a barrel $B\subseteq E$ such that $P\not\subseteq F+B$ for any finite subset $F\subseteq E$. For every $n\in\w$, choose inductively a point $z_n\in P$ so that $z_n\notin \bigcup_{k<n}(z_k+B)$. Observe that the  set  $\{z_n:n\in\w\}$ is infinite, barrel-bounded and barrel-separated. By (v), there is an infinite countable  subset $X \subseteq \{z_n:n\in\w\}$ and a subset $S\subseteq E'_{w^\ast}$ such that  $E'_{w^\ast}$ is selectively sequentially precompact at $S$ and
\[
\sup_{s\in S}|s(x-x')|>1 \; \mbox{ for every distinct } x,x'\in X.
\]
Write the set $X$ as $\{x_n\}_{n\in\w}$ for pairwise distinct points $x_n$.
Then for every $n<m$, we can find a linear functional $f_{n,m}\in S$ such that $|f_{n,m}(x_m-x_n)|>1$. It follows from the selective sequential precompactness of $E'_{w^\ast}$ at $S$ that the set $S$ is bounded in $E'_{w^\ast}$, see for example \cite[Lemma~2.9]{Gabr-free-resp}. Therefore, by Theorem 8.3.2 in \cite{Jar}, the polar $D:=S^\circ$ is a barrel in $E$.  Since the set $P$ is barrel-bounded, the real number $|P|_D=\inf\{r\ge 0:P\subseteq r\cdot D\}$ is well-defined.  It follows that  $X=\{x_n:n\in\w\}\subseteq P\subseteq |P|_D\cdot D$.  Consider the  disk $\ID=\{z\in\IF:|z|\le|P|_D\}$ in the field $\IF$. Observe that for every $n<m$ the inclusion $f_{n,m}\in S\subseteq S^{\circ\circ}= D^\circ$ implies
\[
f_{n,m}(X)\subseteq f_{n,m}(|P|_D\cdot D)=|P|_D\cdot f_{n,m}(D)\subseteq\ID,
\]
which means that $f_{n,m}{\restriction}_X\in\ID^X$.


Using the sequential compactness of the compact metrizable space $\ID^X$, we can construct a decreasing sequence $\{\Omega_n\}_{n\in\w}$ of infinite sets in $\w$ such that for every $n\in\w$, the sequence $\{f_{n,m}{\restriction}_X\}_{m\in\Omega_n}$ converges to some function $f_n\in \ID^X$.

Choose an infinite set $\Omega\subseteq\w$ such that $\Omega\setminus\Omega_n$ is finite for every $n\in\w$. Since the space $\ID^X$ is sequentially compact, we can replace $\Omega$ by a smaller infinite set and additionally assume that the sequence $\{f_n\}_{n\in\Omega}$ converges to some element $f_\infty\in \ID^X$. Since the set $\{f_\infty(x_n)\}_{n\in\Omega}\subseteq\ID$ admits a finite cover by sets of diameter $<\frac14$, we can replace $\Omega$ by a suitable infinite subset and additionally assume that the set $\{f_\infty(x_n)\}_{n\in\Omega}$ has diameter $<\frac14$.

It follows that  the function $f_\infty\in\ID^X$ belongs to the closure of the set $\{f_{n,m}{\restriction}_X:n,m\in \Omega,\; n<m\}$.
Since the space $\ID^X$ is first-countable, we can choose a sequence $\{(n_i,m_i)\}_{i\in\w}\subseteq\{(n,m)\in\Omega\times\Omega:n<m\}$ such that the sequence $\{f_{n_i,m_i}{\restriction}_X\}_{i\in\w}$ converges to $f_\infty$. Since $\IF^X$ is metrizable, for every $i\in\w$ the element $f_{n_i,m_i}{\restriction}_X$ of $\ID^X\subseteq\IF^X$ has an open neighborhood $V_i\subseteq \IF^X$  such that the sequence $\{V_i\}_{i\in\w}$ converges to $f_\infty$ in the sense that each neighborhood of $f_\infty$ in $\IF^X$ contains all but finitely many sets $V_i$. Since $|f_{n_i,m_i}(x_{n_i}-x_{m_i})|>1$, we can replace each set $V_i$ by a smaller neighborhood of $f_{n_i,m_i}{\restriction}_X$ and additionally assume that $|g(x_{n_i})-g(x_{m_i})|>1$ for every $g\in V_i$. For every $i\in\w$ consider the open neighborhood $W_i:=\{f\in E'_{w^\ast}:f{\restriction}_X\in V_i\}$ of the functional $f_{n_i,m_i}$ in the space $E'_{w^\ast}$.

Since $E'_{w^\ast}$ is selectively sequentially precompact at $S$, there exists a Cauchy sequence $\{g_k\}_{k\in\w}\subseteq E'_{w^\ast}$ and an increasing number sequence $\{i(k)\}_{k\in\w}$ such that $g_k\in W_{i(k)}$ for every $k\in\w$. Since the sequence $(g_k)_{k\in\w}$ is Cauchy in $E'_{w^\ast}\subseteq \IF^E$, it converges to some linear functional $g_\infty\in\IF^E$ (which is not necessarily continuous on $E$). The continuity of the restriction operator $E'_{w^\ast}\to\IF^X$, $f\mapsto f{\restriction}_X$, and the choice of the open sets $V_i$, $i\in\w$, guarantee that $g_\infty{\restriction}_X=f_\infty$. Consequently, the sequence $(g_k{\restriction}_X)_{k\in\w}$ converges to $g_\infty{\restriction}_X =f_\infty$ in $\IF^X$.

Then for every $k\in \w$, we can find a number $j_k>k$ such that
\[
\max\{|f_\infty(x_{n_{i(k)}})-g_{j_k}(x_{n_{i(k)}})|, |f_\infty(x_{m_{i(k)}})-g_{j_k}(x_{m_{i(k)}})|\}<\tfrac{1}{8}.
\]

For every $k\in\w$, consider the functional $\mu_k:=g_k-g_{j_k}\in E'$ and observe that the sequence $\{\mu_k\}_{k\in\w}$ converges to zero in $E'_{w^\ast}$. On the other hand,  for every $k\in \w$, the choice of the sequence $(j_k)_{k\in\w}$,
the inequality $\diam \{f_\infty(x_n)\}_{n\in\Omega}<\frac14$, and the inclusion $g_k{\restriction}_X\in V_{i(k)}$  imply
\[
\begin{aligned}
|g_{j_k}(x_{n_{i(k)}}) & -g_{j_k}(x_{m_{i(k)}})|\\
& =|g_{j_k}(x_{n_{i(k)}})-f_\infty(x_{n_{i(k)}})+f_\infty(x_{m_{i(k)}})-g_{j_k}(x_{m_{i(k)}})+f_\infty(x_{n_{i(k)}})-f_\infty(x_{m_{i(k)}})|\\
& \le|g_{j_k}(x_{n_{i(k)}})-f_\infty(x_{n_{i(k)}})|+|f_\infty(x_{m_{i(k)}})-g_{j_k}(x_{m_{i(k)}})|+|f_\infty(x_{n_{i(k)}})-f_\infty(x_{m_{i(k)}})|\\
&\le \tfrac{1}{8}+\tfrac{1}{8}+\tfrac{1}{4}=\tfrac{1}{2}
\end{aligned}
\]
and
\[
\begin{aligned}
|\mu_k(x_{n_{i(k)}}) -\mu_k (x_{m_{i(k)}})|& =|g_{k}(x_{n_{i(k)}})-g_{j_k}(x_{n_{i(k)}})-g_k(x_{m_{i(k)}})+g_{j_k}(x_{m_{i(k)}})|\\
& \ge |g_{k}(x_{n_{i(k)}})-g_k(x_{m_{i(k)}})|-|g_{j_k}(x_{n_{i(k)}})-g_{j_k}(x_{m_{i(k)}})|>1-\tfrac{1}{2}=\tfrac{1}{2}.
\end{aligned}
\]
Then
\[
\sup_{x\in P}|\mu_k(x)|\ge \max\{|\mu_k(x_{n_{i(k)}})|,|\mu_k(x_{m_{i(k)}})|\}\ge\tfrac14,
\]
witnessing that $\|\mu_k\|_P\not\to 0.$
\smallskip

(i)$\Rightarrow$(vi) Let $T:L\to E$ be a limited operator from an lcs $L$ to $E$, and let $B$ be a barrel-bounded subset of $L$. Then the image $T(B)$ is a limited subset of $E$ and hence, by (i), $T(B)$ is barrel-precompact. Thus $T$ is a barrel-precompact operator.
\smallskip

(vi)$\Rightarrow$(vii) is trivial.
\smallskip

(vii)$\Rightarrow$(i) Fix a limited subset $B$ of $E$. It is clear that the closed absolutely convex hull $D$ of the set $B$ in $E$ is also limited. Let $L$ be the linear hull of $D$. Since $D$ is (barrell)-bounded in $E$, the function $\|\cdot\|:L\to[0,\infty)$, $\|\cdot\|:x\mapsto\inf\{r\ge0:x\in rD\}$ is a well-defined norm on the linear space $L$ and the set $D$ coincides with the closed unit ball $B_L$ of the normed space $(L,\|\cdot\|)$. Since the  identity inclusion $T:(L,\|\cdot\|)\to E$ is continuous and the set $D=T(B_L)$ is limited in $E$, the operator $T$ is limited. By (vii), the set $D=T(B_L)$ is barrel-precompact in $E$ and so is the set $B\subseteq D$. Thus $E$ has the Gelfand--Phillips property.
\end{proof}

The following theorem shows that the equivalence (i)$\Leftrightarrow$(ii) in Theorem \ref{t:Drew-GP} remains true for a wider class of locally convex spaces that includes all barrelled normed spaces, in particular, all Banach spaces.

\begin{theorem} \label{t:GP-weak-null}
Let $E$ be a locally convex space such that $E$ has a bounded barrel and $E'$ is dense in $(E_\beta)'_\beta$. Then the space $E$ is Gelfand--Phillips if and only if every limited weakly null sequence in $E$ converges to zero in $E_\beta$.
\end{theorem}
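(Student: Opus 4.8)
The plan is to use the two hypotheses to realize $E_\beta$ as a \emph{normed} space and then import Banach-space machinery across the dual. First I would record the setup: if $U$ is a bounded barrel, its Minkowski functional is a norm $\|\cdot\|$ generating $\beta(E,E')$, so $E_\beta=(E,\|\cdot\|)$ is a normed space with Banach completion $\widehat{E_\beta}$ satisfying $(\widehat{E_\beta})'=(E_\beta)'$. Under this identification ``barrel-bounded'' means norm-bounded, ``barrel-precompact'' means norm-precompact, and the strong topology on $(E_\beta)'$ is just the dual-norm topology, so the density hypothesis says precisely that $E'$ is norm-dense in the dual Banach space $(E_\beta)'$. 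Two preliminary observations will be used throughout. First, $E'$ separates the points of $\widehat{E_\beta}$: if $y\neq 0$, pick $\varphi\in(\widehat{E_\beta})'$ with $\varphi(y)\neq0$ and approximate it in norm by some $\chi\in E'$. Second, every $A\subseteq E$ that is limited in the sense of Definition \ref{def:limited-lcs} is a limited subset of $\widehat{E_\beta}$: given a weak$^\ast$ null sequence $\{\varphi_n\}$ in $(\widehat{E_\beta})'$, choose $\chi_n\in E'$ with $\|\varphi_n-\chi_n\|\to 0$, note $\{\chi_n\}$ is then weak$^\ast$ null in $E'$, and estimate $\|\varphi_n\|_A\le\|\chi_n\|_A+\|\varphi_n-\chi_n\|\cdot\sup_{a\in A}\|a\|\to0$, using that $A$ is limited in $E$ and norm-bounded.

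For necessity, assume $E$ is Gelfand--Phillips and let $\{x_n\}$ be a limited weakly null sequence. The set $\{x_n\}$ is limited, hence barrel-precompact, i.e.\ norm-precompact. If $\{x_n\}$ did not converge to $0$ in $E_\beta$, some subsequence would satisfy $\|x_{n_k}\|\ge r>0$; by norm-precompactness it has a norm-Cauchy, hence in $\widehat{E_\beta}$ convergent, subsequence $\{y_k\}$ with limit $y$. For every $\chi\in E'$ we get $\chi(y)=\lim_k\chi(y_k)=0$ since $\{y_k\}$ is $\sigma(E,E')$-null, so $y=0$ by the separation observation, contradicting $\|y_k\|\ge r$. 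Hence $x_n\to0$ in $E_\beta$.

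For sufficiency, assume every limited weakly null sequence is $E_\beta$-null and let $A\subseteq E$ be limited. Suppose $A$ is not barrel-precompact, i.e.\ not norm-precompact; then there are $\delta>0$ and $\{a_n\}\subseteq A$ with $\|a_n-a_m\|\ge\delta$ for $n\neq m$. By the preliminary observation $A$ is a limited subset of $\widehat{E_\beta}$, and limited subsets of Banach spaces are conditionally weakly compact by a theorem of Bourgain--Diestel \cite{BourDies}; thus $\{a_n\}$ has a subsequence $\{a_{n_k}\}$ that is weakly Cauchy in $\widehat{E_\beta}$, hence $\sigma(E,E')$-Cauchy because $E'\subseteq(E_\beta)'$. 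Setting $b_k:=a_{n_{2k}}-a_{n_{2k+1}}$, the sequence $\{b_k\}$ is $\sigma(E,E')$-null, satisfies $\|b_k\|\ge\delta$, and lies in $A-A$, which is limited in $E$ since $\|\chi\|_{A-A}\le2\|\chi\|_A$ for every $\chi\in E'$. So $\{b_k\}$ is a limited weakly null sequence not converging to $0$ in $E_\beta$, a contradiction. Therefore $A$ is barrel-precompact and $E$ is Gelfand--Phillips.

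The only substantial step, and the main obstacle, is the sufficiency direction, where from a norm-separated limited set one must manufacture a genuine \emph{weakly null} sequence to feed the hypothesis; a norm-separated sequence need not have a weakly convergent subsequence in general. I would resolve this by passing to $\widehat{E_\beta}$ and invoking the Bourgain--Diestel fact that limited sets are conditionally weakly compact, which yields a weakly Cauchy subsequence whose consecutive differences are weakly null yet bounded below in norm. The two hypotheses are exactly what legitimizes this transfer: the bounded barrel supplies the norm so that Banach-space (Rosenthal-type) results apply at all, while the density of $E'$ in $(E_\beta)'_\beta$ ensures both that $E$-limitedness survives in $\widehat{E_\beta}$ and that $E'$ continues to separate points and detect weak limits there.
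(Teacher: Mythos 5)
Your argument stands or falls on the opening claim that the Minkowski functional of a bounded barrel $U$ generates $\beta(E,E')$, and that claim is false in general. The gauge topology of $U$ has $\{\e U\}_{\e>0}$ as a neighbourhood base, and each $\e U$ is itself a barrel, so the gauge topology is \emph{coarser} than $\beta(E,E')$; for the two to coincide, every barrel of $E$ would have to absorb $U$, i.e.\ $U$ would have to be barrel-bounded, which does not follow from boundedness in the original topology (in $E=c_{00}\subseteq c_0$ the unit ball is a bounded barrel, yet the barrel $\{x:|x_n|\le 1/n\ \text{for all } n\}$ does not absorb it). This breaks both halves of your proof exactly where you pass from the barrel topology to the norm: in the necessity direction, failure of $x_n\to 0$ in $E_\beta$ gives a barrel $D$ missed by infinitely many $x_n$ but does \emph{not} give $\|x_{n_k}\|\ge r$, so you end up proving only norm convergence to $0$, which is weaker than convergence in $E_\beta$; in the sufficiency direction, ``not barrel-precompact'' does \emph{not} imply ``not norm-precompact'' (the implication holds in the opposite direction), so you cannot extract your $\delta$-separated sequence. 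The same unjustified identification underlies your replacement of $(E_\beta)'_\beta$ by the dual Banach space of $(E,\|\cdot\|)$ when you transfer the density hypothesis.

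The repair is the one the paper uses: do not fix a single norm in advance, but take the barrel $D$ witnessing the failure (of precompactness, resp.\ of convergence), replace it by the bounded barrel $D\cap U$, and work with \emph{its} gauge; then the separated sequence really is $1$-separated in that norm, and ``not null in that norm'' implies ``not null in $E_\beta$'' because $D\cap U$ is a barrel. With this change, your sufficiency argument---transferring limitedness to the completion via the density hypothesis and invoking the Bourgain--Diestel theorem that limited sets in Banach spaces are conditionally weakly compact, then taking consecutive differences of a weakly Cauchy subsequence---is a genuinely different and shorter route than the paper's, which instead shows the separated sequence has \emph{no} weakly Cauchy subsequence, applies Rosenthal's $\ell_1$-theorem, and uses \cite{BB} to build a non-compact operator into $c_0$ and hence the offending weak$^\ast$ null sequence. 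For the necessity direction, note that the paper's argument (cover the limited, hence barrel-precompact, set by $F+B$, apply the Pigeonhole Principle, and separate by Hahn--Banach) uses neither the bounded barrel nor the density hypothesis, so that half of the theorem holds in any locally convex space; your version, besides the gap above, needs both hypotheses even to get started.
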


\begin{proof} 
To prove the ``only if'' part, assume that $E$ is Gelfand--Phillips and take any limited weakly null sequence $\{x_n\}_{n\in\w}\subseteq E$. Assuming that the sequence $\{x_n\}_{n\in\w}$ does not converge to zero in the topology $\beta(E,E')$, we can find a barrel $B\subseteq E$ such that the set $I=\{n\in\w:x_n\notin 3B\}$ is infinite. By the Gelfand--Phillips property of $E$, the limited set $\{x_n\}_{n\in\w}$ is barrel-precompact and hence $\{x_n\}_{n\in\w}\subseteq F+B$ for some finite set $F$. By the Pigeonhole Principle, there exists an element $y\in F$ such that the set $J=\{n\in I:x_n\in y+B\}$ is infinite.
Assuming that $y\in 2B$, we would conclude that $x_n\in y+B\subseteq 2B+B=3B$ for every $n\in J\subseteq I$, which contradicts the definition of the set $I$. Therefore, $\frac{1}{2}y\notin B$ and, by the Hahn--Banach Theorem, there exists a functional $\chi\in E'$ such that $\sup \chi(B)<\frac{1}{2}\chi(y)=1$. Then
$$\inf_{n\in J} \chi(x_n)\ge \inf\chi(y+B)\ge \chi(y)-\sup\chi(B)\ge 2-1=1,$$
which means that the sequence $\{x_n\}_{n\in\w}$ is not weakly null in $E$. This contradiction shows that the sequence $\{x_n\}_{n\in\w}$  converges to zero in the topology $\beta(E,E')$.
\smallskip

The proof of the ``if'' part is more complicated. Assume that every limited weakly null sequence in $E$ converges to zero in $E_\beta$. Suppose for a contradiction that the space $E$ is not Gelfand--Phillips. Then $E$ contains a limited set $L$ which is not barrel-precompact, and hence $L$ contains a $B$-separated sequence $\{x_n\}_{n\in\w}$ for some barrel $B$. Since $E$ contains a bounded barrel, we can assume that the barrel $B$ is bounded. Then its gauge is a norm $\|\cdot\|$ in the space $E$ and the sequence $\{x_n\}_{n\in\w}$ is $1$-separated in this norm. Since the limited set $L$ is barrel-bounded, the sequence $\{x_n\}_{n\in\w}$ is bounded in the normed space $(E,\|\cdot\|)$.

\begin{claim}\label{cl:no-Cauchy}  
The sequence $\{x_n\}_{n\in\w}$ contains no subsequences which are weakly Cauchy in the normed space $(E,\|\cdot\|)$.
\end{claim}

\begin{proof} 
To derive a contradiction, assume that the sequence $\{x_n\}_{n\in\w}$ does contain a subsequence $\{x_{n_k}\}_{k\in\w}$ which is weakly Cauchy in the normed space $(E,\|\cdot\|)$. Then the sequence $(x_{n_{k+1}}-x_{n_k})_{k\in\w}$ is weakly null in $(E,\|\cdot\|)$ and hence weakly null in $E$ by the continuity of the identity operator $(E,\|\cdot\|)\to E$. Since the set $\{x_n\}_{n\in\w}\subseteq L$ is limited, so is the set $\{x_{n_{k+1}}-x_{n_k}\}_{k\in\w}$. By our assumption, the weakly null limited sequence $\{x_{n_{k+1}}-x_{n_k}\}_{k\in\w}$ is null in $E_\beta$, which is not possible as $x_{n_{k+1}}-x_{n_k}\notin B$ for every $k\in\w$.
\end{proof}

By Claim~\ref{cl:no-Cauchy} and Rosenthal's $\ell_1$-Theorem \cite[p.201]{Diestel}, $(x_n)_{n\in\w}$ contains a subsequence $(x_{n_k})_{k\in\w}$, equivalent to the standard basis in $\ell_1$.
Replacing $(x_n)_{n\in\w}$ by this subsequence, we can assume that $(x_n)_{n\in\w}$ is equivalent to the standard basis in $\ell_1$. Then the closed linear hull $X$ of  the set $\{x_n\}_{n\in\w}$ in the completion $\hat E$ of the normed space $(E,\|\cdot\|)$ is isomorphic to the Banach space $\ell_1$. Observe that the identity inclusion $I:X\to \hat E$ is not compact. By \cite{BB}, there exists a continuous operator $T:\hat E\to c_0$ such that $T{\restriction}_X=T\circ I:X\to c_0$ is not compact. Let $D$ be the absolutely convex hull of the set $L_0=\{x_n\}_{n\in\w}$. Since $(x_n)_{n\in\w}$ is equivalent to the standard basis in $\ell_1$, the set $D$ is a bounded neighborhood of zero in $X$. 
Then, by the non-compactness of the operator $T{\restriction}_X$, the image $T(D)$ is not precompact in $c_0$. Since absolutely convex hulls of precompact sets are precompact and $T(D)$ is contained in the absolutely convex hull of the set $T(L_0)$ in $c_0$, the set $T(L_0)$ is not precompact in $c_0$ and hence $\|e'_n\|_{T(L_0)}\not\to 0$, where $(e_n')_{n\in\w}$ is the sequence of coordinate functionals in $c_0$. For every $n\in\w$, consider the functional $e'_n\circ T{\restriction}_E$ which is continuous in the norm $\|\cdot\|$ of the space $E$ and hence is a continuous functional on the space $E_\beta$. Since the barrel $B$ is bounded and $E'$ is dense in $(E_\beta)'_\beta$, there exists a functional $\chi_n\in E'$ such that $\|\chi_n-e'_n\circ T\|_{B}<\frac{1}{2^n}$. Then $(\chi_n)_{n\in\w}$ is a weak$^\ast$ null sequence in $E'$ such that $\|\chi_n\|_{L_0}\not\to 0$, which implies that the set $L_0$ is not limited. But this contradicts the choice of the limited set $L\supseteq L_0$.
\end{proof}

We use Theorem \ref{t:GP-characterization} to obtain some hereditary properties of the class of Gelfand--Phillips spaces. We define a linear subspace $X$ of a locally convex space $E$ to be {\em $\beta$-embedded} if the identity inclusion $X_\beta\to E_\beta$ is a topological embedding. It is easy to see that $X$ is $\beta$-embedded in $E$ if and only if for any barrel $B\subseteq X$ there exists a barrel $D\subseteq E$ such that $D\cap X\subseteq B$.

\begin{proposition}\label{p:beta-emb} 
A subspace $X$ of a locally convex space $E$ is $\beta$-embedded if one of the following conditions is satisfied:
\begin{enumerate}
\item[{\rm (i)}] $X$ is complemented in $E$;
\item[{\rm(ii)}] $X$ is barrelled;
\item[{\rm(iii)}] $X$ and $E$ are $\beta$-Banach and $X_\beta$ is closed in $E_\beta$.
\end{enumerate}
\end{proposition}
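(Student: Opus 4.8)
The plan is to reduce all three parts to the criterion recorded immediately before the statement: $X$ is $\beta$-embedded in $E$ precisely when every barrel $B\subseteq X$ contains a set $D\cap X$ for some barrel $D\subseteq E$. I would first dispatch the easy half of the criterion once and for all: for any barrel $D\subseteq E$ the trace $D\cap X$ is absolutely convex, is closed in $X$ (as $D$ is closed in $E$ and $X$ carries the subspace topology) and is absorbing in $X$, hence is a barrel of $X$. Consequently the topology induced on $X$ from $E_\beta$ is always coarser than $\beta(X,X')$, so the inclusion $\iota\colon X_\beta\to E_\beta$ is automatically a continuous linear injection, and in each case only the converse inclusion --- producing, for a given barrel $B\subseteq X$, a barrel $D\subseteq E$ with $D\cap X\subseteq B$ --- needs to be established.

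For (i) I would use a continuous projection $P\colon E\to E$ with range $X$ and $P{\restriction}_X=\id_X$; then $X=\ker(\id_E-P)$ is closed in $E$, so a barrel $B$ of $X$ is also closed in $E$. Taking $D:=P^{-1}(B)$, continuity and linearity of $P$ make $D$ closed and absolutely convex, and $D$ absorbs each $e\in E$ because $B$ absorbs $P(e)$; thus $D$ is a barrel of $E$, and $P(x)=x$ for $x\in X$ gives $D\cap X=B$. For (ii) barrelledness says the barrel $B$ is already a neighborhood of zero for the original topology of $X$, which is the subspace topology from $E$; hence some neighborhood $V$ of zero in $E$ satisfies $V\cap X\subseteq B$, and any closed absolutely convex neighborhood $D\subseteq V$ of zero in $E$ (such form a base) is a barrel with $D\cap X\subseteq B$.

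The real content sits in (iii), and this is where I expect the main work. Here I would abandon explicit constructions and argue instead by completeness. By the first paragraph $\iota\colon X_\beta\to E_\beta$ is a continuous linear injection whose image is the subset $X$, and this subset is by hypothesis closed in the Banach space $E_\beta$; therefore $X$, with the topology induced from $E_\beta$, is a closed subspace of a Banach space and so is itself Banach. Viewing $\iota$ as a continuous linear bijection of the Banach space $X_\beta$ onto this closed subspace, the Open Mapping Theorem forces $\iota$ to be open, hence a topological isomorphism onto its image; equivalently $\beta(X,X')$ agrees with the topology induced on $X$ from $E_\beta$, which is exactly $\beta$-embeddedness. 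The points demanding care are that the hypothesis ``$\beta$-Banach'' genuinely provides both normability (through a bounded barrel) and completeness of $X_\beta$ and $E_\beta$, and that the closedness assumption is invoked precisely to ensure that the target subspace is complete, so that the Open Mapping Theorem is applicable.
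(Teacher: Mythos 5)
Your proposal is correct and follows essentially the same route as the paper: in (i) the barrel of $E$ is obtained as the preimage of $B$ under the projection, in (ii) barrelledness makes $B$ a neighborhood of zero which is then traced from a barrel neighborhood in $E$, and in (iii) the Banach Open Mapping Theorem upgrades the continuous inclusion $X_\beta\to E_\beta$ with closed image to a topological embedding. The only differences are cosmetic (you work with a projection $P:E\to E$ rather than a retraction $R:E\to X$, and you spell out the easy direction of the $\beta$-embedding criterion, which the paper leaves implicit).
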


\begin{proof}
Given a barrel $B\subseteq X$, we should find a barrel $D\subseteq E$ such that $D\cap X\subseteq B$.
\smallskip

(i) If $X$ is complemented in $E$, then there exists a linear continuous operator $R:E\to X$ such that $R(x)=x$ for all $x\in X$. In this case the set $D=R^{-1}(B)$ is a barrel in $E$ with $D\cap X=B$.
\smallskip

(ii) If $X$ is barrelled, then the barrel $B$ is a neighborhood of zero. Since $X$ is a subspace of $E$, there exists a barrel neighborhood $D\subseteq E$ of zero such that $D\cap X\subseteq B$.
\smallskip

(iii) Assume that the spaces $X$ and $E$ are $\beta$-Banach and $X_\beta$ is closed in $E_\beta$. Then the identity inclusion $I:X_\beta\to E_\beta$ is a continuous injective operator between Banach spaces such that the image $I(X_\beta)$ is closed in $E_\beta$. By the Banach Open Mapping Principle, the operator $I:X_\beta\to E_\beta$ is a topological embedding.
\end{proof}

In the next corollary we give some sufficient conditions on a subspace of a  Gelfand--Phillips  space to have the  Gelfand--Phillips property.

\begin{corollary} \label{c:GP-subspace}
Assume that a locally convex space $E$ is Gelfand--Phillips. Then:
\begin{enumerate}
\item[{\rm (i)}] Every $\beta$-embedded subspace of $E$ is Gelfand--Phillips.
\item[{\rm (ii)}] Every barrelled subspace of $E$ is Gelfand--Phillips.
\item[{\rm (iii)}] If $E_\beta$ is barrelled (for example, $E$ is $\beta$-Banach), then $E_\beta$ is Gelfand--Phillips.
\end{enumerate}
\end{corollary}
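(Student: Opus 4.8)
The plan is to deduce all three items from one transfer principle, namely that ``limited'' passes from a subspace (or from $E_\beta$) up to $E$, while barrel-precompactness passes back down under the stated hypotheses; Proposition~\ref{p:beta-emb} then does the rest. Item (ii) requires no separate work: by Proposition~\ref{p:beta-emb}(ii) every barrelled subspace is $\beta$-embedded, so it is a special case of (i). Thus the effort concentrates on (i) and (iii).

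For (i), I would fix a $\beta$-embedded subspace $X$ and a limited set $A\subseteq X$, and first show that $A$ is limited as a subset of $E$. Barrel-boundedness transfers for free, since for any barrel $B\subseteq E$ the trace $B\cap X$ is a barrel in $X$, whence the inclusion $X_\beta\to E_\beta$ is continuous and carries barrel-bounded sets to barrel-bounded sets. For the functional condition, given a weak$^\ast$ null sequence $\{\chi_n\}_{n\in\w}$ in $E'$, its restrictions $\chi_n{\restriction}_X$ form a weak$^\ast$ null sequence in $X'$, and because $A\subseteq X$ one has $\|\chi_n\|_A=\|\chi_n{\restriction}_X\|_A\to 0$; hence $A$ is limited in $E$. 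The Gelfand--Phillips property of $E$ then makes $A$ barrel-precompact in $E$, i.e.\ precompact in $E_\beta$. The final step pushes this precompactness back into $X_\beta$: given any barrel $B\subseteq X$, choose by $\beta$-embeddedness a barrel $D\subseteq E$ with $D\cap X\subseteq B$, and take a finite $F\subseteq A$ with $A\subseteq F+D$; then for $a\in A$ and the corresponding $f\in F$ the difference $a-f$ lies in $X\cap D\subseteq B$, so $A\subseteq F+B$ in $X$. Hence $A$ is barrel-precompact in $X$, and $X$ is Gelfand--Phillips.

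For (iii), the preliminary remark is that when $E_\beta$ is barrelled every barrel of $E_\beta$ is already a neighborhood of zero, so the barrel topology of $E_\beta$ coincides with $\beta(E,E')$; equivalently $(E_\beta)_\beta=E_\beta$, and ``barrel-precompact in $E_\beta$'' means exactly ``precompact in $E_\beta$''. I would then fix a limited set $A$ in $E_\beta$ and argue, as in (i), that $A$ is limited in $E$. The one point needing attention is that $(E_\beta)'$ is in general strictly larger than $E'$; however $E'\subseteq (E_\beta)'$, and a sequence in $E'$ is weak$^\ast$ null in $E'$ if and only if it is weak$^\ast$ null in $(E_\beta)'$, since both are pointwise convergence to zero on the common underlying set $E=E_\beta$. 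Consequently every weak$^\ast$ null sequence in $E'$ converges uniformly on $A$, so $A$ is limited in $E$; the Gelfand--Phillips property of $E$ gives that $A$ is precompact in $E_\beta$, which by the preliminary remark is the same as barrel-precompact in $E_\beta$. The parenthetical case follows because a $\beta$-Banach space has $E_\beta$ a Banach space, hence barrelled.

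The main obstacle, deserving the most care, is the \emph{downward} transfer of precompactness in (i) and the identification $(E_\beta)_\beta=E_\beta$ in (iii). In both situations mere continuity of the relevant inclusion yields only the easy half; one genuinely needs the stronger hypothesis---$\beta$-embeddedness in (i), barrelledness of $E_\beta$ in (iii)---to recover precompactness in the finer target topology. Everything else is bookkeeping about which topology each occurrence of ``barrel'' refers to, together with the elementary fact that weak$^\ast$ nullity is a pointwise condition unaffected by enlarging the dual.
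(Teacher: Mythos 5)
Your proof is correct, but it follows a genuinely different route from the paper. You argue directly from the definitions: you show that a limited set $A$ in the subspace $X$ (resp.\ in $E_\beta$) is still limited in $E$ --- barrel-boundedness transfers because traces of barrels of $E$ are barrels of $X$, and the uniform-convergence condition transfers because restrictions of weak$^\ast$ null sequences in $E'$ are weak$^\ast$ null in $X'$ (resp.\ because $E'\subseteq(E_\beta)'$ and weak$^\ast$ nullity is pointwise) --- and then you pull barrel-precompactness back down using precisely the $\beta$-embeddedness characterization $D\cap X\subseteq B$ (resp.\ the identification $(E_\beta)_\beta=E_\beta$ for barrelled $E_\beta$). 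The paper instead routes all three items through the characterization of Theorem~\ref{t:GP-characterization}(iv): it takes an infinite barrel-bounded, barrel-separated set $D$ in the subspace, observes that $\beta$-embeddedness (resp.\ barrelledness of $E_\beta$) lets one replace the separating barrel of the subspace by a barrel of $E$, invokes the Gelfand--Phillips property of $E$ to produce a continuous operator $T:E\to C_p^0(\w)$ with $T(D)$ non-precompact in $c_0$, and restricts $T$ to the subspace. Both treatments handle (ii) identically, as a special case of (i) via Proposition~\ref{p:beta-emb}(ii). Your version is more elementary in that it needs nothing beyond the definitions and makes transparent exactly where $\beta$-embeddedness is used (only for the downward transfer of precompactness, the upward transfer of limitedness being automatic for any subspace); the paper's version is shorter once Theorem~\ref{t:GP-characterization} is available and avoids re-verifying the two transfer steps by packaging them into the single observation that barrel-separated sets and continuous operators restrict well. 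One small point worth making explicit in your write-up of (i) is that when you extract the finite set $F$ with $A\subseteq F+D$ you need $F\subseteq A$ (so that $a-f\in X$); this is legitimate since, as the paper notes in its definition of precompactness, the finite set may always be chosen inside $A$.
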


\begin{proof}
(i) Let $X$ be a $\beta$-embedded subspace of $E$ and $D$ be an infinite barrel-bounded barrel-separated subset of $X$. Take any barrel $B\subseteq X$ such that $D$ is $B$-separated. Since $X$  is $\beta$-embedded, there exists a barrel $B'\subseteq E$ such that $B'\cap X\subseteq B$. Observe that $D$ is also $B'$-separated in $E$. Since $E$ is Gelfand--Phillips, by  Theorem  \ref{t:GP-characterization}, there exist a continuous operator $T:E\to C_p^0(\w)$ such that $T(D)$ is not  precompact in  $c_0$. Then the restriction $T{\restriction}_X$ of $T$ onto $X$ is continuous and $T{\restriction}_X(D)=T(D)$ is not precompact  in  $c_0$. Thus, by Theorem  \ref{t:GP-characterization}, the lcs $X$ is Gelfand--Phillips.
\smallskip

(ii) follows from (i) and  Proposition \ref{p:beta-emb}(ii).
\smallskip

(iii) Assume that the space $E_\beta$ is barrelled and take any infinite barrel-bounded barrel-separated subset $D\subseteq E_\beta$. Find a barrel $B\subseteq E_\beta$ such that $D$ is $B$-separated.  Since $E_\beta$ is barrelled, $B$ is a neighborhood of zero in $E_\beta$. By the definition of $E_\beta$, $B$ is a barrel in $E$. Therefore, by Theorem  \ref{t:GP-characterization}, there exist a continuous operator $T:E\to C_p^0(\w)$ such that $T(D)$ is not  precompact in  $c_0$. It is clear that  for the identity inclusion $i:E_\beta\to E$ and the operator $T\circ i:E_\beta\to C_p^0(\w)$, the image $T\circ i(D)=T(D)$  is not  precompact in  $c_0$. Thus, by Theorem  \ref{t:GP-characterization}, the lcs $E_\beta$ is Gelfand--Phillips.
\end{proof}





Theorem \ref{t:GP-characterization} suggests also to study topological spaces which are selectively sequentially pseudocompact at itself. Following \cite{DAS1}, we call a topological space $X$  {\em selectively sequentially pseudocompact} if $X$ is selectively sequentially pseudocompact at $X$.  Clearly, every sequentially compact space is selectively sequentially pseudocompact, and every selectively sequentially pseudocompact space is pseudocompact.
It is easy to see that a topological space $X$ is selectively sequentially pseudocompact if and only if $X$ is selectively sequentially pseudocompact at some dense set $A\subseteq X$.
Compact selectively sequentially pseudocompact spaces form the class $\KK''$ introduced by Drewnowski and Emmanuele \cite{DrewEm}.


\begin{corollary}\label{c:GP-sufficient}
A locally convex space $E$ is Gelfand--Phillips if one of the following conditions holds:
\begin{enumerate}
\item[{\rm (i)}] for every barrel $B\subseteq E$ there exists a barrel $D\subseteq B$ such that $E'_{w^\ast}$ is selectively sequentially precompact at the polar $D^\circ\subseteq E'_{w^\ast}$;
\item[{\rm (ii)}] for every barrel $B\subseteq E$ there exists a barrel $D\subseteq B$ such that $E'_{w^\ast}$ is selectively sequentially pseudocompact at the polar $D^\circ\subseteq E'_{w^\ast}$;
\item[{\rm (iii)}] for any barrel $B\subseteq E$ there exists a barrel $D\subseteq B$ whose polar $D^\circ$ endowed with the weak$^\ast$-topology is selectively sequentially pseudocompact;
\item[{\rm (iv)}] $E$ is separable and barrelled.
\end{enumerate}
\end{corollary}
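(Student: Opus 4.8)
The plan is to reduce all four conditions to the single implication $(i)\Rightarrow(GP)$ and to prove that implication through criterion (v) of Theorem~\ref{t:GP-characterization}. First I would record the easy chain $(iv)\Rightarrow(iii)\Rightarrow(ii)\Rightarrow(i)$. The step $(ii)\Rightarrow(i)$ is immediate, since selective sequential pseudocompactness at a set always implies selective sequential precompactness at that set. For $(iii)\Rightarrow(ii)$, given weak$^\ast$-open sets $U_n\subseteq E'_{w^\ast}$ meeting $D^\circ$, I would intersect them with $D^\circ$ to obtain nonempty relatively open subsets of the subspace $D^\circ$; selective sequential pseudocompactness of $D^\circ$ then produces points $\chi_n\in U_n\cap D^\circ$ with a subsequence converging inside $D^\circ$, hence in $E'_{w^\ast}$. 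For $(iv)\Rightarrow(iii)$, I would simply take $D=B$: barrelledness makes $B$ a neighborhood of zero, so its polar $D^\circ$ is equicontinuous and thus weak$^\ast$-compact by the Alaoglu--Bourbaki theorem, while separability of $E$ forces the weak$^\ast$ topology on the equicontinuous set $D^\circ$ to be metrizable; a compact metrizable space is sequentially compact and therefore selectively sequentially pseudocompact.

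The heart of the argument is $(i)\Rightarrow(GP)$, which I would establish by verifying condition (v) of Theorem~\ref{t:GP-characterization}. Let $A$ be an infinite barrel-bounded, barrel-separated subset of $E$, and fix a barrel $B\subseteq E$ witnessing the $B$-separation of $A$, so that $a-a'\notin B$ for all distinct $a,a'\in A$. Applying (i) to this $B$, I obtain a barrel $D\subseteq B$ for which $E'_{w^\ast}$ is selectively sequentially precompact at $S:=D^\circ$. The key observation is that $D\subseteq B$ gives $a-a'\notin D$ for all distinct $a,a'\in A$; since $D$ is a barrel, it is closed and absolutely convex, hence weakly closed, so the bipolar theorem yields $D=D^{\circ\circ}$, and therefore $a-a'\notin D^{\circ\circ}$ translates exactly into $\sup_{s\in D^\circ}|s(a-a')|>1$. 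Taking $X=A$ (or any countably infinite subset of it) together with $S=D^\circ$ then supplies precisely the data required by Theorem~\ref{t:GP-characterization}(v), whence $E$ is Gelfand--Phillips.

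Combining the two parts, each of $(i)$--$(iv)$ implies the Gelfand--Phillips property. I expect no genuine obstacle here: the only points requiring care are the passage through the bipolar theorem, where one must confirm that a barrel coincides with its own bipolar (closedness and absolute convexity guarantee weak closedness, so $D^{\circ\circ}=D$), and the metrizability fact used in $(iv)\Rightarrow(iii)$, namely that weak$^\ast$-compact equicontinuous subsets of the dual of a separable space are weak$^\ast$-metrizable. Both facts are standard, so once criterion (v) of Theorem~\ref{t:GP-characterization} is invoked the corollary follows directly.
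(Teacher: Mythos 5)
Your proposal is correct and follows essentially the same route as the paper: both reduce everything to the chain $(iv)\Rightarrow(iii)\Rightarrow(ii)\Rightarrow(i)$ and then verify criterion (v) of Theorem~\ref{t:GP-characterization} for an infinite barrel-bounded, barrel-separated set, using Alaoglu--Bourbaki plus weak$^\ast$-metrizability of equicontinuous sets for (iv). The only cosmetic difference is that you derive $\sup_{s\in D^\circ}|s(a-a')|>1$ from the bipolar theorem ($D=D^{\circ\circ}$), whereas the paper cites the Hahn--Banach Separation Theorem directly; these are interchangeable here.
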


\begin{proof}
(i) In order to apply Theorem~\ref{t:GP-characterization}(v), fix any infinite barrel-bounded barrel-separated  subset $X$ of $E$. Choose a barrel $B\subseteq E$ such that $X$ is $B$-separated. By (i), there exists a barrel $D\subseteq B$ such that the space $E'_{w^\ast}$ is selectively sequentially precompact at the set $D^\circ$. By the Hahn--Banach Separation Theorem \cite[7.3.5]{Jar}, $\sup_{s\in D^\circ}|s(x)|>1$ for any $x\in E\setminus D$. In particular, $\sup_{s\in D^\circ}|s(x-x')|>1$ for any distinct points $x,x'\in X$. Now it is legal to apply the implication (v)$\Rightarrow$(i) in Theorem~\ref{t:GP-characterization} and conclude that the locally convex space $E$ is Gelfand--Phillips.
\smallskip

(ii) follows from (i) since if $E'_{w^\ast}$ is selectively sequentially pseudocompact at $D^\circ$, then $E'_{w^\ast}$ is also  selectively sequentially precompact at $D^\circ$.
\smallskip

(iii) Observing that for any selectively sequentially pseudocompact subspace $S\subset E'_{w^\ast}$, the space $E'_{w^\ast}$ is selectively sequentially pseudocompact at $S$, we conclude that (iii) follows from (ii).
\smallskip

(iv) Assume that $E$ is barelled and separable. Since $E$ is barrelled, each barrel $B$ in $E$ is a neighborhood of zero. By the Alaoglu--Bourbaki Theorem \cite[8.5.2]{Jar}, the polar $B^\circ$ is compact in $E'_{w^\ast}$. Since $E$ is separable, the compact space $B^\circ$ is metrizable according to \cite[8.5.3]{Jar}. Being metrizable and compact, the space $B^\circ$ is selectively sequentially pseudocompact. Applying (iii), we conclude that $E$ has the Gelfand--Phillips property.
\end{proof}

In the next proposition we use the following simple lemma.
\begin{lemma}\label{l:C0-c0-cont}
Each continuous operator $T:L\to C_p^0(\w)$ from a barrelled space $L$ remains continuous as an operator from $L$ to $c_0$.
\end{lemma}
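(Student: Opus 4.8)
The plan is to show that $U:=T^{-1}(B_{c_0})$ is a barrel in $L$; since $L$ is barrelled this forces $U$ to be a neighbourhood of zero, and norm-continuity of $T$ follows at once.

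First I would record that each coordinate functional $e'_n:C_p^0(\w)\to\IF$ is continuous for the pointwise topology, so that $f_n:=e'_n\circ T$ is a continuous linear functional on $L$, i.e. $f_n\in L'$. Because the underlying set of $C_p^0(\w)$ is $c_0$, for every $x\in L$ the point $Tx$ lies in $c_0$ and satisfies $Tx\in B_{c_0}$ exactly when $|f_n(x)|\le 1$ for all $n\in\w$; hence
\[
U=\bigcap_{n\in\w}\{x\in L:|f_n(x)|\le 1\}.
\]
Each set in this intersection is closed and absolutely convex by continuity of $f_n$, so $U$ is closed and absolutely convex as well.

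Next I would verify that $U$ is absorbing: given $x\in L$, the vector $Tx\in c_0$ has finite sup-norm $M:=\|Tx\|_\infty$, and either $M=0$, whence $x\in U$, or $\tfrac1M x\in U$ since $T(\tfrac1M x)=\tfrac1M Tx\in B_{c_0}$. Thus $U$ is a barrel. As $L$ is barrelled, $U$ is a neighbourhood of zero, and since $T(U)\subseteq B_{c_0}$ maps it into a bounded subset of the normed space $c_0$, the operator $T:L\to c_0$ is continuous.

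There is no real obstacle in this argument; the only point worth isolating is that barrelledness of $L$ is precisely the hypothesis that upgrades the coordinatewise (pointwise) continuity of $T$ into continuity for the finer norm topology of $c_0$, via the barrel $T^{-1}(B_{c_0})$.
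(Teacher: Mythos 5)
Your argument is correct and follows essentially the same route as the paper: both show that the closed unit ball of $c_0$ pulls back under $T$ to a barrel in $L$ (the paper by noting the ball is itself a barrel in $C_p^0(\w)$ and taking preimages, you by verifying closedness, absolute convexity and absorbency of $T^{-1}(B_{c_0})$ directly via the coordinate functionals), and then both invoke barrelledness of $L$ to conclude.
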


\begin{proof}
Let $\{e'_n\}_{n\in\w}$ be the sequence of coordinate functionals on the Banach space $c_0$. The definition of the topology of the space $C_p^0(\w)$ ensures that each functional $e_n'$ remains continuous on the locally convex space $C_p^0(\w)$. Observe that the intersection $B:=\bigcap_{n\in\w}\{x\in C_p^0(\w):|e'_n(x)|\le 1\}$ coincides with the closed unit ball of the Banach space $c_0$. Since $B$ is a barrel also in $C^0_p(\w)$, the continuity of the operator $T$ implies that the set $T^{-1}(B)$ is a barrel in $L$. Since $L$ is barrelled, $T^{-1}(B)$ is a neighborhood of zero, which means that the operator $T:L\to c_0$ is continuous.
\end{proof}

Let $Y$ be a locally convex space. A locally convex space $E$ is defined to have the {\em separable $Y$-extension property} if every separable subspace of $E$ is contained in a barelled separable linear subspace $X\subseteq E$ such that each  continuous operator $T:X\to Y$ can be extended to a continuous operator $\bar T:E\to Y$. This definition implies that each separable barrelled locally convex space has the separable $Y$-extension property for any locally convex space $Y$.

\begin{proposition} \label{p:barrelled-c0-extension}
A \textup{(}barrelled\textup{)} locally convex space $E$ has the separable $C_p^0(\w)$-extension property if \textup{(}and only if\/\textup{)} $E$ has the separable $c_0$-extension property.
\end{proposition}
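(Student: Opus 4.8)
The plan is to prove the equivalence of the two extension properties by exploiting the fact that the spaces $C_p^0(\w)$ and $c_0$ have the \emph{same} underlying linear space and the same topological dual, differing only in their topologies, with the coordinate functionals $\{e'_n\}_{n\in\w}$ continuous in both. The key bridge is Lemma~\ref{l:C0-c0-cont}: on a barrelled space, continuity into $C_p^0(\w)$ and continuity into $c_0$ coincide. Since the separable subspaces $X$ witnessing either extension property are required to be barrelled, an operator $T:X\to C_p^0(\w)$ is automatically an operator $T:X\to c_0$ and conversely (the reverse direction is immediate, since the identity $c_0\to C_p^0(\w)$ is continuous). Thus the only substantive content is the comparison of the extension step on the full space $E$.

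First I would prove the implication ``separable $c_0$-extension $\Rightarrow$ separable $C_p^0(\w)$-extension.'' Given a separable subspace $S\subseteq E$, the $c_0$-extension property furnishes a barrelled separable $X\supseteq S$ such that every continuous $T:X\to c_0$ extends to $\bar T:E\to c_0$. Now take any continuous $T:X\to C_p^0(\w)$. By Lemma~\ref{l:C0-c0-cont}, $T$ is continuous as an operator $X\to c_0$; extend it to $\bar T:E\to c_0$; and compose with the continuous identity $c_0\to C_p^0(\w)$ to obtain a continuous extension $E\to C_p^0(\w)$. This direction needs no barrelledness hypothesis on $E$ and is therefore the ``if'' half.

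For the converse (the parenthetical ``only if''), I would assume $E$ itself is barrelled and that $E$ has the separable $C_p^0(\w)$-extension property, then verify the separable $c_0$-extension property. Given separable $S\subseteq E$, produce the barrelled separable $X\supseteq S$ with the $C_p^0(\w)$-extension property. Take a continuous $T:X\to c_0$; composing with the identity $c_0\to C_p^0(\w)$ makes it continuous into $C_p^0(\w)$, so it extends to a continuous $\bar T:E\to C_p^0(\w)$. Here is where barrelledness of $E$ is used: by Lemma~\ref{l:C0-c0-cont} applied to the barrelled space $E$, the extension $\bar T:E\to C_p^0(\w)$ is automatically continuous as an operator $E\to c_0$, giving the required $c_0$-valued extension.

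The main point to watch — the only place where the argument is not purely formal — is the correct bookkeeping of the two topologies on the common linear space, and in particular recognizing that Lemma~\ref{l:C0-c0-cont} must be invoked twice with two different barrelled spaces: once for the witnessing subspace $X$ (available in both directions, since $X$ is barrelled by definition of the extension property) and once for $E$ itself (available only under the extra parenthetical barrelledness assumption). This asymmetry in the hypotheses is precisely what forces the ``only if'' direction to require that $E$ be barrelled while the ``if'' direction does not, explaining the parenthetical phrasing of the statement.
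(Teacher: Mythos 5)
Your proof is correct and takes essentially the same approach as the paper's: the forward implication is word-for-word the paper's argument (apply Lemma~\ref{l:C0-c0-cont} to the barrelled witnessing subspace, extend into $c_0$, then compose with the continuous identity $c_0\to C_p^0(\w)$), and your converse is exactly the ``analogous argument'' the paper leaves implicit, with the barrelledness of $E$ correctly identified as the hypothesis needed to apply Lemma~\ref{l:C0-c0-cont} to the extension $\bar T:E\to C_p^0(\w)$.
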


\begin{proof}
Assume that  $E$ has the separable $c_0$-extension property, and let $H$ be a separable subspace of $E$. By the separable $c_0$-extension property, $H$ is contained in a barrelled separable linear subspace $L\subseteq E$ such that each continuous operator $T:L\to c_0$ can be extended to a continuous operator $\bar T:E\to c_0$. Let $T:L\to C_p^0(\w)$ be a continuous operator. Since $L$ is barrelled,  Lemma \ref{l:C0-c0-cont} implies that $T$ is also continuous as an operator from $L$ to the Banach space $c_0$. Therefore $T$ can be extended to a continuous operator $\bar T:E\to c_0$. Clearly, $\bar T$ is also continuous as an operator from $E$ to $C_p^0(\w)$. Thus $E$ has the separable $C_p^0(\w)$-extension property.

If $E$ is barrelled and has the separable $C_p^0(\w)$-extension property, then an analogous argument shows that $E$ has the separable $c_0$-extension property.
\end{proof}




Using the classical Sobczyk Theorem \cite[p.72]{Diestel} (which states that if $H$ is a linear subspace of a separable Banach space $E$ and $T:H\to c_0$ is a bounded  operator, then there is  a bounded operator $S:E\to c_0$ extending $T$ to the whole $E$), one can prove the following characterization showing that for Banach spaces our definition of separable $c_0$-extension property is equivalent to that introduced by Correa and Tausk \cite{CT13,CT14}.

\begin{proposition}
A Banach space $E$ has the  separable $c_0$-extension property if and only if every continuous operator $T:X\to c_0$ defined on a separable subspace $X\subseteq E$ can be extended to a  continuous operator $\bar T:E\to c_0$.
\end{proposition}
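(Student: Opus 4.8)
The plan is to prove both implications, concentrating the nontrivial content in a single direction and using Sobczyk's Theorem as the bridge between a separable subspace and the larger barrelled subspace supplied by the separable $c_0$-extension property.

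For the ``if'' part, suppose that every continuous operator $T:X\to c_0$ on a separable subspace $X\subseteq E$ extends to a continuous operator on $E$. To verify the separable $c_0$-extension property, I would take an arbitrary separable subspace $H\subseteq E$ and set $L:=\overline{H}$, its closure in $E$. Since $E$ is a Banach space, $L$ is a closed separable subspace, hence itself a separable Banach space and in particular barrelled, and of course $H\subseteq L$. Any continuous operator $S:L\to c_0$ is then an operator defined on the separable subspace $L$, so by hypothesis it extends to a continuous operator $E\to c_0$. Thus $L$ witnesses the required property and this direction needs no further work.

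For the ``only if'' part, assume $E$ has the separable $c_0$-extension property, and fix a separable subspace $X\subseteq E$ together with a continuous operator $T:X\to c_0$. Applying the property to the separable subspace $X$, I obtain a barrelled separable linear subspace $L\subseteq E$ with $X\subseteq L$ such that every continuous operator $L\to c_0$ extends to a continuous operator on $E$. The difficulty is that $T$ is given only on $X$, whereas the extension property attached to $L$ applies to operators defined on all of $L$; so the key step is to first propagate $T$ from $X$ up to $L$.

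This is precisely where Sobczyk's Theorem enters, and it is the main (indeed essentially the only) nontrivial point of the argument. I would consider the closure $\overline{L}$ of $L$ in $E$, which is a separable Banach space containing $X$ as a linear subspace. By Sobczyk's Theorem, the bounded operator $T:X\to c_0$ extends to a bounded operator $S:\overline{L}\to c_0$. Restricting $S$ to $L$ gives a continuous operator $S{\restriction}_L:L\to c_0$, which by the separable $c_0$-extension property of $E$ extends to a continuous operator $\bar T:E\to c_0$. Since $X\subseteq L\subseteq\overline{L}$ and $S{\restriction}_X=T$, the operator $\bar T$ satisfies $\bar T{\restriction}_X=T$, which is the desired extension. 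The remaining verifications (separability of $\overline{L}$, completeness and hence barrelledness of the closed subspaces involved, and compatibility of the successive restrictions) are routine and require no delicate estimates.
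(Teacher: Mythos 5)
Your proof is correct and follows exactly the route the paper intends: the paper offers no written-out argument beyond the remark that the characterization follows from Sobczyk's Theorem, and your argument supplies precisely that, using Sobczyk to lift $T$ from the separable subspace $X$ to the (closure of the) barrelled separable subspace $L$ provided by the separable $c_0$-extension property, and handling the converse by taking $L=\overline{H}$. Both directions are sound, and the routine points you defer (separability and completeness of closures, compatibility of restrictions) are indeed routine.
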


By \cite{CT13,CT14} the class of Banach spaces with the separable $c_0$-extension property includes all weakly compactly generated Banach spaces, all Banach spaces with {\em the separable complementation property} (=every separable subspace is contained in a separable complemented subspace), and all Banach spaces $C(K)$ over $\aleph_0$-monolithic compact lines $K$. Let us recall that a topological space $X$ is {\em $\aleph_0$-monolithic} if each separable subspace of $X$ has a countable network.

\begin{theorem}\label{t:GP-extension}
Every locally convex space $E$ with the separable $C_p^0(\w)$-extension property is Gelfand--Phillips.
\end{theorem}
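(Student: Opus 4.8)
The plan is to verify criterion (iv) of Theorem~\ref{t:GP-characterization}: for every infinite barrel-bounded barrel-separated set $D\subseteq E$ I must produce a continuous operator $T\colon E\to C_p^0(\w)$ whose image $T(D)$ is not precompact in $c_0$. So fix such a set $D$ and choose a countably infinite subset $D_0=\{d_n:n\in\w\}\subseteq D$. Since any subset of a barrel-bounded (resp. barrel-separated) set is again barrel-bounded (resp. barrel-separated), $D_0$ is itself an infinite barrel-bounded barrel-separated subset of $E$. Its linear span $H=\spn(D_0)$ is a separable subspace of $E$, so the separable $C_p^0(\w)$-extension property provides a barrelled separable linear subspace $X\subseteq E$ with $H\subseteq X$ such that every continuous operator $X\to C_p^0(\w)$ extends to a continuous operator $E\to C_p^0(\w)$.

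Next I claim that $D_0$ is an infinite barrel-bounded barrel-separated subset of $X$. Because $X$ is barrelled, every barrel in $X$ is a neighborhood of zero, so the barrel topology $\beta(X,X')$ coincides with the original topology of $X$ and thus $X_\beta=X$. Consequently barrel-boundedness in $X$ is ordinary boundedness in $X$: since $D_0$ is barrel-bounded in $E$ it is bounded in $E$ (the topology $\beta(E,E')$ being finer than the original one), hence bounded in the subspace $X$, hence barrel-bounded in $X$. For barrel-separatedness, pick a barrel $B\subseteq E$ with $d-d'\notin B$ for all distinct $d,d'\in D_0$; then $B\cap X$ is a barrel in $X$ and $D_0$ is $(B\cap X)$-separated. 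Thus $D_0$ meets the hypotheses of Theorem~\ref{t:GP-characterization} inside $X$.

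Now $X$ is separable and barrelled, so by Corollary~\ref{c:GP-sufficient}(iv) the space $X$ is Gelfand--Phillips. Applying the implication (i)$\Rightarrow$(iv) of Theorem~\ref{t:GP-characterization} to the set $D_0\subseteq X$ yields a continuous operator $S\colon X\to C_p^0(\w)$ with $S(D_0)$ not precompact in $c_0$. By the choice of $X$, the operator $S$ extends to a continuous operator $T\colon E\to C_p^0(\w)$, and since $T{\restriction}_X=S$ and $D_0\subseteq X$ we get $T(D)\supseteq T(D_0)=S(D_0)$. As precompactness passes to subsets, $T(D)$ cannot be precompact in $c_0$. This verifies condition (iv) of Theorem~\ref{t:GP-characterization}, and the implication (iv)$\Rightarrow$(i) there gives that $E$ is Gelfand--Phillips.

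The routine core of the argument is the combination of two earlier facts — that separable barrelled spaces are Gelfand--Phillips (Corollary~\ref{c:GP-sufficient}(iv)) and that the property is detected by operators into $C_p^0(\w)$ (Theorem~\ref{t:GP-characterization}(iv)) — glued together by the extension property. The only point demanding genuine care is the bookkeeping in the second paragraph: one must confirm that passing from $E$ to the barrelled subspace $X$ preserves both barrel-boundedness and barrel-separatedness of $D_0$, which is precisely where the identity $X_\beta=X$ for barrelled $X$ is used.
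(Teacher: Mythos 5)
Your proof is correct and follows essentially the same route as the paper's: pass to a countable subset of $D$, invoke the extension property to land in a separable barrelled subspace $X$, apply Corollary~\ref{c:GP-sufficient}(iv) and criterion (iv) of Theorem~\ref{t:GP-characterization} inside $X$, and extend the resulting operator to $E$. Your second paragraph merely spells out a bookkeeping step (that barrel-boundedness and barrel-separatedness of $D_0$ persist in the barrelled subspace $X$) which the paper leaves implicit; the verification is accurate.
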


\begin{proof}
In order to apply Theorem~\ref{t:GP-characterization}(iv), fix  an infinite barrel-bounded, barrel-separated set $D\subseteq E$. Choose any countable infinite subset $I\subseteq D$. By the separable $C_p^0(\w)$-extension property, $I$ is contained in a barrelled separable linear subspace $X\subseteq E$ such that every  continuous operator $T:X\to C_p^0(\w)$ can be extended to a  continuous operator $\bar T:E\to C_p^0(\w)$.

By Corollary~\ref{c:GP-sufficient}(iv), the barrelled separable lcs $X$ is Gelfand--Phillips. By Theorem~\ref{t:GP-characterization}, there exists a continuous operator $T:X\to C_p^0(\w)$ such that the set $T(I)$ is not precompact in the Banach space $c_0$. By the choice of $X$, the operator $T$ can be extended to a continuous operator $\bar T:E\to C_p^0(\w)$. Observing that the set $\bar T(D)\supseteq T(I)$ is not precompact in $c_0$, we can apply Theorem~\ref{t:GP-characterization} and conclude that $E$ is Gelfand--Phillips.
\end{proof}

Proposition \ref{p:barrelled-c0-extension} and Theorem \ref{t:GP-extension} imply
\begin{corollary} \label{c:barrelled-exten-GP}
A barrelled space with the separable $c_0$-extension property is Gelfand--Phillips.
\end{corollary}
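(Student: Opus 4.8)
The plan is to obtain this corollary as an immediate chaining of the two results that precede it, namely Proposition~\ref{p:barrelled-c0-extension} and Theorem~\ref{t:GP-extension}; no new argument should be needed.

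First I would feed the hypothesis into Proposition~\ref{p:barrelled-c0-extension}. Assuming $E$ has the separable $c_0$-extension property, the forward (``if'') implication of that proposition yields that $E$ has the separable $C_p^0(\w)$-extension property. The engine behind this implication is Lemma~\ref{l:C0-c0-cont}: on a barrelled separable subspace $L\subseteq E$ the two continuity notions coincide, since a continuous operator $L\to C_p^0(\w)$ is automatically continuous as an operator into the Banach space $c_0$. Hence a $c_0$-valued extension $\bar T:E\to c_0$ supplied by the $c_0$-extension property simultaneously serves as a continuous $C_p^0(\w)$-valued extension, which is exactly what the $C_p^0(\w)$-extension property demands.

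Second, I would apply Theorem~\ref{t:GP-extension} verbatim: every locally convex space with the separable $C_p^0(\w)$-extension property is Gelfand--Phillips. Composing the two steps delivers the claim for $E$.

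I do not expect any genuine obstacle here, as the statement is essentially a repackaging of earlier work; the only point worth flagging is a bookkeeping one. The direction of Proposition~\ref{p:barrelled-c0-extension} that gets used is the easy ``if'' direction, which is valid for \emph{every} locally convex space (the barrelledness assumption there is needed only for the converse). Consequently the barrelledness of $E$ is not literally consumed by this deduction: it is already built into the definition of the separable $c_0$-extension property, whose intermediate subspaces are required to be barrelled. Keeping ``barrelled'' in the hypothesis nonetheless aligns the corollary with the Banach-space statement in Theorem~\ref{t:Banach-GP}(iii), which it is designed to generalize.
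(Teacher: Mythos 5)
Your proposal is correct and is exactly the paper's argument: the corollary is stated as an immediate consequence of Proposition~\ref{p:barrelled-c0-extension} and Theorem~\ref{t:GP-extension}, chained in precisely the way you describe. Your side remark about the barrelledness hypothesis being used only for the converse direction of the proposition is also accurate.
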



We finish this section with some conditions on a locally convex space implying the failure of the Gelfand--Phillips property.


\begin{theorem}\label{t:not-eJNP}
A locally convex space $E$ is not Gelfand--Phillips if  the identity map $E'_{w^\ast}\to (E'_\beta)_w$ is sequentially continuous and $E$ admits a  continuous operator $T:c_0\to E$ which is not $\beta$-to-$\beta$ precompact.
\end{theorem}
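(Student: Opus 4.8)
The plan is to turn the hypotheses into a single limited set witnessing the failure of the Gelfand--Phillips property, namely the image $T(B_{c_0})$ of the closed unit ball $B_{c_0}$ of $c_0$. First I would exploit that $c_0$, being a Banach space, is barrelled, so that $(c_0)_\beta=c_0$ carries the norm topology and its barrel-bounded sets are exactly the norm-bounded ones. Consequently, $T$ failing to be $\beta$-to-$\beta$ precompact means that $T(B)$ is not barrel-precompact for some norm-bounded $B\subseteq c_0$; since $B\subseteq rB_{c_0}$ for some $r>0$, and scalar multiples and subsets of barrel-precompact sets are barrel-precompact, already $T(B_{c_0})$ fails to be barrel-precompact. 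It therefore suffices to show that $T(B_{c_0})$ is limited.

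To see that $T(B_{c_0})$ is barrel-bounded (as required by Definition~\ref{def:limited-lcs}), take any barrel $D\subseteq E$. Its preimage $T^{-1}(D)$ is closed, absolutely convex and absorbing, hence a barrel in $c_0$, and barrelledness of $c_0$ makes it a neighborhood of zero; thus $T^{-1}(D)$ absorbs $B_{c_0}$ and $D$ absorbs $T(B_{c_0})$.

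The main obstacle is proving that $T(B_{c_0})$ is limited, i.e. that $\|\chi_k\|_{T(B_{c_0})}=\sup_{x\in B_{c_0}}|\chi_k(Tx)|=\|\chi_k\circ T\|_{\ell_1}\to 0$ for every weak$^\ast$ null sequence $(\chi_k)_{k\in\w}$ in $E'$. The difficulty is that weak$^\ast$ convergence only yields coordinatewise convergence of the functionals $\chi_k\circ T\in(c_0)'=\ell_1$, which is far weaker than the norm convergence we need; this is exactly the gap the two hypotheses are designed to close. The transpose $T^{*}\colon E'_\beta\to(c_0)'_\beta=\ell_1$, $\chi\mapsto\chi\circ T$, is continuous as the adjoint of the continuous operator $T$, hence weak-to-weak continuous. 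By the sequential-continuity hypothesis, $\chi_k\to 0$ in $E'_{w^\ast}$ gives $\chi_k\to 0$ in $(E'_\beta)_w$, so $T^{*}\chi_k=\chi_k\circ T\to 0$ weakly in $\ell_1$. Finally I would invoke the Schur property of $\ell_1$ to upgrade this to norm convergence, $\|\chi_k\circ T\|_{\ell_1}\to 0$, which is precisely the limitedness of $T(B_{c_0})$.

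Putting the pieces together, $T(B_{c_0})$ is a barrel-bounded limited subset of $E$ that is not barrel-precompact, so $E$ is not Gelfand--Phillips. The only genuinely delicate point is the weak$^\ast$-to-norm passage in the previous paragraph, where both hypotheses cooperate with the Schur property of $\ell_1$; everything else is a routine unwinding of the barrelledness of $c_0$ and of the definitions.
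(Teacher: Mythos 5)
Your proposal is correct and follows essentially the same route as the paper: both arguments hinge on passing a weak$^\ast$ null sequence through the sequential-continuity hypothesis to the weak topology of $E'_\beta$, pushing it forward by the adjoint $T^\ast$ into $(\ell_1)_w$, and invoking the Schur property to get norm convergence, which forces $\|\chi_k\|_{T(B_{c_0})}\to 0$. The only difference is presentational — you argue directly that $T(B_{c_0})$ is a limited, non-barrel-precompact set, while the paper assumes the Gelfand--Phillips property and derives a contradiction — and your explicit verification that $T(B_{c_0})$ is barrel-bounded and non-barrel-precompact (via barrelledness of $c_0$) correctly fills in details the paper leaves implicit.
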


\begin{proof}
Since the operator $T:c_0\to E$ is not $\beta$-to-$\beta$ precompact, the image $T(B)$ of the closed unit ball $B=\{x\in c_0:\|x\|_0\le 1\}$ of $c_0$ is not barrel-precompact in $E$. Assuming that the space $E$ is Gelfand--Phillips, we can find a  null sequence $\{\mu_n\}_{n\in\w}\subseteq E'_{w^\ast}$ such that $\|\mu_n\|_{T(B)}\not\to 0$. By our assumption, the identity map $E'_{w^\ast}\to (E'_\beta)_w$ is sequentially continuous, which implies that the sequence $\{\mu_n\}_{n\in\w}$ converges to zero in the weak topology of the strong dual space $E'_\beta$. Then for the dual operator $T^\ast:(E_\beta)'_w\to (c_0)'_w=(\ell_1)_w$, the sequence $\{T^\ast\mu_n\}_{n\in\w}$ converges to zero in the weak topology of the Banach space $\ell_1$. By the Schur Theorem \cite[VII]{Diestel}, this sequence converges to zero in norm. Observe that for every $n\in\w$ and each $x\in B$, we have
\[
\|\mu_n\|_{T(B)}=\sup_{x\in B}|\mu_n(T(x))|=\sup_{x\in B}|(T^\ast\mu_n)(x)|=\|T^\ast\mu_n\|\to 0,
\]
which contradicts the choice of the sequence $\{\mu_n\}_{n\in\w}$.
\end{proof}


\section{The Gelfand--Phillips property in function spaces} \label{sec:GP-f}


In this section we apply the results from the preceding sections to study the  Gelfand--Phillips property in function spaces. 

We shall use repeatedly the following assertion whose proof can be found in \cite{BG-JNP}.

\begin{proposition}\label{p:bb-Ck}
Let $X$ be a Tychonoff space, and let $\TTT$ be a locally convex topology on $C(X)$ such that $\TTT_p\subseteq \TTT\subseteq \TTT_k$. Then:
\begin{enumerate}
\item[{\rm(i)}] for every barrel $D$ in $C_\TTT(X)$, there are a functionally bounded subset $A$ of $X$ and $\e>0$ such that $[A;\e]\subseteq D$.
\item[{\rm(ii)}] a subset $\F\subseteq C_\TTT(X)$  is barrel-bounded if and only if for any functionally bounded set $A\subseteq X$, the set $\F(A):=\bigcup_{f\in\F}f(A)$ is bounded in $\IF$;
\item[{\rm(iii)}] $\big(C_\TTT(X)\big)_{\!\beta} = C_b(X)$.
\end{enumerate}
\end{proposition}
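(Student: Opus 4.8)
The plan is to derive (ii) and (iii) from (i) and to concentrate the real work on (i). First note the easy half of (iii): whenever $A\subseteq X$ is functionally bounded and $\e>0$, the set $[A;\e]$ is a barrel in $C_\TTT(X)$. Indeed it is absolutely convex; it is absorbing because functional boundedness of $A$ forces $\sup_{x\in A}|f(x)|<\infty$ for each $f\in C(X)$; and it is $\TTT$-closed, since $[A;\e]=\bigcap_{x\in A}\{f:|f(x)|\le\e\}$ is already $\TTT_p$-closed and $\TTT_p\subseteq\TTT$. Hence the topology of $C_b(X)$ (uniform convergence on functionally bounded sets) is coarser than $\beta(C(X),E')$. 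The content of (i) is exactly the reverse inclusion: every barrel contains a basic $C_b(X)$-neighbourhood $[A;\e]$. Granting (i), the equality $\big(C_\TTT(X)\big)_\beta=C_b(X)$ of (iii) follows, and (ii) is then read off: $\F$ is barrel-bounded iff it is absorbed by every barrel, which using (i) is equivalent to $\sup_{f\in\F}\sup_{x\in A}|f(x)|<\infty$ for each functionally bounded $A$, i.e. to each $\F(A)$ being bounded.

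For (i), fix a barrel $D$ in $C_\TTT(X)$. Because $\TTT\subseteq\TTT_k$, every $\TTT$-continuous functional is $\TTT_k$-continuous, so $E'=(C_\TTT(X))'$ sits inside the space $M_c(X)=(C_k(X))'$ of compactly supported Radon measures; and since $D$ is $\TTT$-closed and absolutely convex, the bipolar theorem gives $D=(D^\circ)^\circ$ in the duality $(C(X),E')$. I would reformulate the goal through the gauge $p_D$ of $D$: as $D$ is closed, $D=\{f:p_D(f)\le1\}$, while $D$ absorbing makes $p_D$ finite everywhere and $\TTT_k$-lower semicontinuous (its sublevel sets $tD$ are $\TTT_k$-closed). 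Writing $p_A(f)=\sup_{x\in A}|f(x)|$, the conclusion $[A;\e]\subseteq D$ is equivalent, by homogeneity, to the domination $p_D\le\tfrac1\e\,p_A$; dually it says that every $\mu\in D^\circ$ is carried by the single functionally bounded set $A$ and has variation $\le\tfrac1\e$. Thus (i) amounts to two uniformity statements about the $\sigma(E',E)$-bounded family $D^\circ$ of measures: (a) their supports lie in the closure of one functionally bounded set $A$, and (b) their variations are uniformly bounded.

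Both (a) and (b) I would prove by a gliding-hump argument, the crucial constraint being that it must avoid Baire category, since $C_k(X)$ is barrelled only when $X$ is a $\mu$-space. For (a): if $\bigcup_{\mu\in D^\circ}\supp\mu$ were not functionally bounded, some $f\in C(X)$ would be unbounded on it, yielding points $x_k\in\supp\mu_k$ with $f(x_k)\to\infty$; after thinning to a well-separated subsequence one tries to pick a locally finite family of Urysohn humps concentrated near the $x_k$, assemble them into a single $g\in C(X)$, and scale the blocks so that $|\mu_k(g)|\to\infty$, contradicting $\sigma(E',E)$-boundedness. Statement (b) is a Nikodym/Banach--Steinhaus-type boundedness phenomenon, handled by a similar construction once the supports are confined to $\overline A$. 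I expect the main obstacle to be precisely the execution of these constructions for an arbitrary, possibly non-hemicompact, Tychonoff $X$: controlling the cross terms $\mu_k(h_j)$ arising because the measures' supports are not localized at the single point $x_k$, arranging local finiteness and continuity of the assembled function $g$, and extracting a sufficiently separated subsequence of the escaping supports, all without the exhaustion by compacta that hemicompactness or a Baire argument would supply.
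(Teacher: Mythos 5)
Your reduction of (ii) and (iii) to (i), your verification that each $[A;\e]$ is itself a barrel in $C_\TTT(X)$, and your dual reformulation of (i) via the bipolar theorem --- namely that it suffices to show the measures in $D^\circ\subseteq (C_k(X))'$ have their supports in a single functionally bounded set and uniformly bounded variations --- are all correct. The genuine gap is that this reformulated statement, which is the entire mathematical content of the proposition, is never actually proved: the gliding-hump construction is described in the conditional (``one tries to pick\dots'', ``I expect the main obstacle to be\dots''), and the points you list as obstacles --- the cross terms $\mu_k(h_j)$, the extraction of a workable subsequence of measures, and the Nikodym-type uniform bound on variations --- are precisely where the argument must be made to work. (For comparison purposes, note that the paper itself contains no proof of this proposition; it is quoted from the reference on the Josefson--Nissenzweig property.)

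For what it is worth, the classical route (the Nachbin--Shirota/Buchwalter--Schmets argument) sidesteps your cross-term problem entirely by never evaluating a fixed sequence of measures against one assembled function. One calls a closed set $Z\subseteq X$ a \emph{carrier} of the barrel $D$ if every $f\in C(X)$ vanishing on a neighbourhood of $Z$ lies in $\tfrac12 D$, shows that carriers are stable under intersection (partitions of unity plus convexity of $D$) and that the minimal carrier $F$ exists (closedness of $D$). Functional boundedness of $F$ is then obtained by a gliding hump performed on \emph{functions}: if some $g\in C(X)$ were unbounded on $F$, one gets a locally finite disjoint family of open sets $U_n$ meeting $F$ and functions $g_n\notin\tfrac12 D$ supported in $U_n$, and the disjointness of supports together with convexity and closedness of $D$ contradicts the fact that $D$ absorbs $\sum_n ng_n$. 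The uniform bound on variations is not obtained by another hump but by transporting the pointwise bounded family $D^\circ$ to the Banach space $C(\mathrm{cl}_{\beta X}F)$ (through which every $f\in C(X)$, being bounded on $F$, factors after truncation) and applying Banach--Steinhaus there. Without some such device, your plan as written does not close.
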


Below we give examples of locally convex spaces without the $(GP)$ property. Recall that a Tychonoff space $X$ is called an {\em $F$-space} if every functionally open set $A$ in $X$ is {\em $C^\ast$-embedded} in the sense that every bounded continuous function $f:A\to \IR$ has a continuous extension $\bar f:X\to\IR$. For  numerous equivalent conditions for a Tychonoff space $X$ being an $F$-space, see \cite[14.25]{GiJ}. In particular, the Stone--\v{C}ech compactification $\beta \Gamma$ of a discrete space $\Gamma$ is a compact $F$-space. 

\begin{example}\label{exa:F-space-eJNP}
For any infinite compact $F$-space $K$, the spaces $C(K)$ and  $C_p(K)$ are not Gelfand--Phillips.
\end{example}

\begin{proof}
We proved in \cite{BG-GP-Banach} that for any compact $F$-space $K$ the Banach space $C(K)$ is not Gelfand--Phillips. By Proposition~\ref{p:bb-Ck}, $(C_p(K))_\beta=C(K)$. By (iii) of Corollary~\ref{c:GP-subspace}, the function space $C_p(K)$ is not Gelfand--Phillips.
\end{proof}

Below we provide a sufficient condition on the space $X$ for which $C_p(X)$ is Gelfand--Phillips.

\begin{theorem}\label{t:Cp-GP}
Let $X$ be a Tychonoff space such that every functionally bounded subset $K\subseteq X$ is contained in a subset $K'$ such that $X$ is  selectively sequentially pseudocompact at $K'$. Then the  function space $C_p(X)$ is Gelfand--Phillips.
\end{theorem}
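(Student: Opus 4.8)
The plan is to verify condition (iii) of Corollary~\ref{c:GP-sufficient}, namely that for every barrel $B\subseteq C_p(X)$ there exists a barrel $D\subseteq B$ whose polar $D^\circ$, endowed with the weak$^\ast$ topology, is selectively sequentially pseudocompact. By Proposition~\ref{p:bb-Ck}(i) (applied with $\TTT=\TTT_p$), any barrel $B$ in $C_p(X)$ contains a set of the form $[A;\e]=\{f\in C(X):\sup_{x\in A}|f(x)|\le\e\}$ for some functionally bounded $A\subseteq X$ and $\e>0$. Since $[A;\e]=\e\cdot[A;1]$ and scaling does not affect whether a polar is selectively sequentially pseudocompact, it suffices to treat barrels of the form $D=[A;\e]\subseteq B$. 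The key point is to identify the polar $D^\circ$ concretely and then transfer the selective sequential pseudocompactness hypothesis on $X$ (via the enlarged set $K'\supseteq A$) to $D^\circ$.

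First I would enlarge $A$: by hypothesis there is a set $K'\supseteq A$ such that $X$ is selectively sequentially pseudocompact at $K'$; replacing $A$ by $K'$ only shrinks $D=[K';\e]$, keeping $D\subseteq[A;\e]\subseteq B$, so I may assume $X$ is selectively sequentially pseudocompact at $A$ itself. The evaluation map $\delta:X\to C_p(X)'_{w^\ast}$, $x\mapsto\delta_x$ (point evaluation), is continuous from $X$ into $C_p(X)'_{w^\ast}$ because the weak$^\ast$ topology is exactly the topology of pointwise evaluation against functions in $C(X)$, and for each $f$ the map $x\mapsto\delta_x(f)=f(x)$ is continuous. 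The polar $D^\circ=[A;\e]^\circ$ is the weak$^\ast$-closed absolutely convex hull of the evaluation functionals $\{\tfrac1\e\delta_x:x\in A\}$ (this is the standard description of polars of such barrels in $C_p(X)$), and in particular $\tfrac1\e\delta(A)\subseteq D^\circ$.

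The main step is then to show that $D^\circ$ is selectively sequentially pseudocompact. The idea is that selective sequential pseudocompactness at the dense subset $\tfrac1\e\delta(A)$ of the compact set $D^\circ$ suffices, using the remark in the excerpt that a space is selectively sequentially pseudocompact iff it is so at some dense subset. Given open sets $U_n\subseteq D^\circ$ meeting $D^\circ$, I would pull them back along the continuous scaled evaluation $x\mapsto\tfrac1\e\delta_x$: since $\tfrac1\e\delta(A)$ is dense in $D^\circ$ and the $U_n$ are open and nonempty, each preimage $V_n:=\{x\in A:\tfrac1\e\delta_x\in U_n\}$ is a nonempty relatively open subset of $A$ (extending to open sets of $X$ meeting $A$). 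Applying selective sequential pseudocompactness of $X$ at $A$ yields a sequence $(x_n)\in\prod_n V_n$ with a subsequence $(x_{n_k})$ converging in $X$; then by continuity of evaluation, $(\tfrac1\e\delta_{x_{n_k}})$ converges in $C_p(X)'_{w^\ast}$, hence in $D^\circ$, while $\tfrac1\e\delta_{x_n}\in U_n$ for each $n$. This produces the required point of $\prod_n U_n$ with a convergent subsequence, establishing selective sequential pseudocompactness of $D^\circ$.

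The delicate point I expect to be the main obstacle is the careful handling of the passage between open sets of $D^\circ$ and open sets of $X$ meeting $A$: one must check that $V_n$ is genuinely the trace on $A$ of an open set of $X$ (so that selective sequential pseudocompactness at $A$ applies), and that the limiting evaluation functional actually lies in $D^\circ$ rather than merely in $C_p(X)'_{w^\ast}$ --- this uses weak$^\ast$-compactness of $D^\circ$ via the Alaoglu--Bourbaki theorem to ensure closedness under the limit. Once these topological bookkeeping issues are settled, Corollary~\ref{c:GP-sufficient}(iii) delivers the Gelfand--Phillips property of $C_p(X)$ immediately.
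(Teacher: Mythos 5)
There is a genuine gap at the heart of your argument: the claim that $\tfrac1\e\delta(A)$ is dense in the polar $D^\circ=[A;\e]^\circ$. By the bipolar theorem $D^\circ$ is the weak$^\ast$-closed \emph{absolutely convex hull} of $\{\tfrac1\e\delta_x:x\in A\}$, and the scaled Dirac measures form (essentially) the set of extreme points of that hull, not a dense subset of it. For example, for $X=A=[0,1]$ and $\e=1$ the polar $D^\circ$ is the closed unit ball of the space of measures on $[0,1]$, whereas $\delta([0,1])$ is weak$^\ast$-compact, hence closed, and is a very small proper subset; already $0\in D^\circ$ lies far from it. Consequently open sets $U_n$ meeting $D^\circ$ need not meet $\tfrac1\e\delta(A)$ at all, your pullback sets $V_n$ may be empty, and the selection argument cannot start; so neither Corollary~\ref{c:GP-sufficient}(iii) nor its weaker variant (ii) has been verified. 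This is not a repairable bookkeeping issue: passing from selective sequential pseudocompactness of $X$ at $K'$ to the same property of the closed absolutely convex hull of $\delta(K')$ is exactly the step that is not available in general --- which is why the paper states Proposition~\ref{p:GP-P(K)w} and Corollary~\ref{c:GP-P(K)}(ii) with the separate, genuinely stronger hypothesis on the convex hull $P_\w(K)$. (A secondary, fixable point: for $[K';\e]$ to be a barrel you must check that $K'$ is functionally bounded; this does follow from selective sequential pseudocompactness of $X$ at $K'$, but it needs to be said.)

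The correct route, and the one the paper takes, avoids the polar altogether by using Theorem~\ref{t:GP-characterization}(v) rather than Corollary~\ref{c:GP-sufficient}. Given an infinite barrel-bounded $B$-separated set $A\subseteq C_p(X)$, choose $K$ and $\e$ with $[K;\e]\subseteq B$, enlarge $K$ to $K'$, and take $S=\tfrac1\e\delta(K')$ itself --- no convex hull and no closure. Your (correct) observations that $\delta$ is a topological embedding with closed image and that convergence transfers along it give that $E'_{w^\ast}$ is selectively sequentially pseudocompact, hence precompact, at $S$; and the norming inequality $\sup_{s\in S}|s(a-a')|>1$ for distinct $a,a'\in A$ is checked directly from $a-a'\notin B\supseteq[K;\e]$, i.e. $\|a-a'\|_{K'}\ge\|a-a'\|_K>\e$. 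Condition (v) of Theorem~\ref{t:GP-characterization} only asks for a set $S$ at which the dual is selectively sequentially precompact and which $1$-separates the given set, and this weaker demand is precisely what the hypothesis on $X$ can deliver.
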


\begin{proof}
In order to apply Theorem~\ref{t:GP-characterization} to the locally convex space $E=C_p(X)$, take any infinite barrel-bounded barrel-separated subset $A$ of $E$. Find a barrel $B\subseteq E$ such that $A$ is $B$-separated. Choose an arbitrary sequence of pairwise distinct elements  $\{a_n\}_{n\in\w}$ of $A$. Then the set $\{ a_n-a_m: n,m\in\w, \, n\not= m\}$ is contained in $E\SM B$.
By (i) of Proposition \ref{p:bb-Ck}, there exists a functionally bounded closed set $K\subseteq X$ and a real number $\e>0$ such that the barrel $D:=\{f\in C_p(X):\|f\|_K\le\e\}$ is contained in $B$. By our assumption, $K$ is contained in a subset  $K'\subseteq X$ such that $X$ is  selectively sequentially pseudocompact at $K'$. Then for any element $a_n-a_m\in  E\setminus B\subseteq E\setminus D$ we have
\[
\sup_{x\in K'}|a_n(x)-a_m(x)|\ge\sup_{x\in K}|a_n(x)-a_m(x)|>\e.
\]
Let $\delta:X\to E'_{w^\ast}$ be the continuous function assigning to each point $x\in X$ the evaluation functional $\delta_x\in E'$ at $x$ (so $\delta_x(f)=f(x)$ for every $f\in C_p(X)$). Consider the set $S:=\{\frac{1}{\e}\delta_x:x\in K'\}\subseteq E'_{w^\ast}$. It is well known that $\delta$ is a homeomorphic embedding with the closed image. This fact and the selective sequential pseudocompactness of $X$ at $K'$ imply that the space $E'_{w^\ast}$ is  selectively sequentially pseudocompact at $\delta(K')$ and also at its homothetic copy $S=\frac{1}{\e}\delta(K')$. Observe that for every  positive integers $n\ne m$, we have
\[
\sup_{s\in S}|s(a_n-a_m)|=\tfrac{1}{\e}\sup_{x\in K'}|a_n(x)-a_m(x)|>\tfrac{1}{\e} \cdot\e=1.
\]
Applying Theorem~\ref{t:GP-characterization}, we conclude that the locally convex space $E=C_p(X)$ is Gelfand--Phillips.
\end{proof}

Corollary~\ref{c:GP-subspace} and Theorem~\ref{t:Cp-GP} imply

\begin{corollary}\label{c:GP-sspC}
For every  selectively sequentially pseudocompact space $K$, the function spaces $C_p(K)$ and $C(K)$ are Gelfand--Phillips.
\end{corollary}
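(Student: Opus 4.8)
The plan is to deduce both assertions from Theorem~\ref{t:Cp-GP} together with Corollary~\ref{c:GP-subspace}, the only genuinely new observation being a pseudocompactness argument that identifies the Banach space $C(K)$ with $\big(C_p(K)\big)_\beta$.

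First I would verify that the hypothesis of Theorem~\ref{t:Cp-GP} holds automatically for $X=K$. Since $K$ is selectively sequentially pseudocompact, it is by definition selectively sequentially pseudocompact at $K$ itself; hence for every functionally bounded subset $A\subseteq K$ one may simply take $K':=K$, so that $A\subseteq K'$ and $K$ is selectively sequentially pseudocompact at $K'$. Theorem~\ref{t:Cp-GP} then yields directly that $C_p(K)$ is Gelfand--Phillips, which settles the first claim.

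For the space $C(K)$, the key step is to recognize it as $\big(C_p(K)\big)_\beta$. By Proposition~\ref{p:bb-Ck}(iii) we have $\big(C_p(K)\big)_\beta=C_b(K)$. Because every selectively sequentially pseudocompact space is pseudocompact, no continuous real-valued function on $K$ is unbounded, so $C_b(K)=C(K)$ and the two carry the same uniform (sup-norm) Banach topology. Thus $C_p(K)$ is $\beta$-Banach with $\big(C_p(K)\big)_\beta=C(K)$. Applying Corollary~\ref{c:GP-subspace}(iii) to the Gelfand--Phillips space $C_p(K)$, we conclude that its barrel topologization $\big(C_p(K)\big)_\beta=C(K)$ is Gelfand--Phillips as well.

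I do not expect any serious obstacle here: the substantive work is carried entirely by Theorem~\ref{t:Cp-GP}, and the passage to $C(K)$ reduces to the elementary fact that a pseudocompact space admits no unbounded continuous real-valued functions. The one point worth stating carefully is that the barrel topology $\beta\big(C_p(K),C_p(K)'\big)$ recovers the uniform norm topology, by Proposition~\ref{p:bb-Ck}(iii); this is exactly what makes $C(K)=\big(C_p(K)\big)_\beta$ a Banach space, hence barrelled, so that the ``$\beta$-Banach'' case of Corollary~\ref{c:GP-subspace}(iii) applies.
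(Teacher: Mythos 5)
Your proof is correct and follows essentially the same route as the paper, which derives the corollary from Theorem~\ref{t:Cp-GP} (with $K'=K$) together with Corollary~\ref{c:GP-subspace}, identifying $C(K)$ with $\big(C_p(K)\big)_\beta=C_b(K)$ via Proposition~\ref{p:bb-Ck}(iii) and pseudocompactness. The details you supply are exactly the ones the paper leaves implicit.
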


Since the spaces $[0,\w_1]$ and $[0,\w_1)$ are sequentially compact, 
Corollary~\ref{c:GP-sspC} implies

\begin{example}
The spaces $C_p[0,\w_1]$ and $C_p[0,\w_1)$ are Gelfand--Phillips.
\end{example}

In the next two theorems we show that  the Gelfand--Phillips property for function spaces satisfies some ``hereditary property'' in the sense that if $C(X)$ has the Gelfand--Phillips property for a topology $\TTT$ then the space $C(X)$ has this property also for a finer topology. For a locally convex topology $\TTT$ on $C(X)$ we will denote the locally convex space $(C(X),\Tau)$ by $C_\TTT(X)$.

\begin{theorem} \label{t:GP-between}
Let $X$ be a Tychonoff space, and let $\tau$ and $\TTT$ be locally convex topologies on $C(X)$ such that $\TTT_p\subseteq\tau\subseteq\TTT\subseteq\TTT_k$. If the space $C_{\tau}(X)$ is Gelfand--Phillips, then so is $C_{\TTT}(X)$. In particular, if $C_p(X)$ is Gelfand--Phillips, then so is $\CC(X)$.
\end{theorem}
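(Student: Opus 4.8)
The plan is to combine the operator-characterization of the Gelfand--Phillips property in Theorem~\ref{t:GP-characterization}(iv) with the invariance of the barrel topology recorded in Proposition~\ref{p:bb-Ck}(iii). The decisive observation is that $\big(C_\tau(X)\big)_\beta = C_b(X) = \big(C_\TTT(X)\big)_\beta$, so the barrel topology on $C(X)$ is the \emph{same} for $\tau$ and for $\TTT$. Consequently the notions of barrel-bounded, barrel-precompact and barrel-separated set are intrinsic to the common space $C_b(X)$ and do not depend on which of the two topologies we work with. This reduces the task to producing, for a given separated set, a single continuous map into $C_p^0(\w)$ and then transporting it between the two topologies.

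Concretely, to verify Theorem~\ref{t:GP-characterization}(iv) for $C_\TTT(X)$, I would fix an arbitrary infinite barrel-bounded, barrel-separated subset $D$ of $C_\TTT(X)$ and first check that $D$ is again infinite, barrel-bounded and barrel-separated \emph{in} $C_\tau(X)$. Barrel-boundedness is immediate from the identification of the $\beta$-spaces. For barrel-separatedness I would take a $\TTT$-barrel $B$ witnessing the separation of $D$; since $B$ is a zero-neighbourhood of $\big(C_\TTT(X)\big)_\beta = \big(C_\tau(X)\big)_\beta$, it contains some $\tau$-barrel $B'$, and as $D$ is $B$-separated with $B' \subseteq B$ it is automatically $B'$-separated in $C_\tau(X)$. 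Now the Gelfand--Phillips property of $C_\tau(X)$, through Theorem~\ref{t:GP-characterization}(iv), yields a continuous operator $T : C_\tau(X) \to C_p^0(\w)$ whose image $T(D)$ is not precompact in $c_0$. Because $\tau \subseteq \TTT$, the identity map $\id : C_\TTT(X) \to C_\tau(X)$ is continuous, so $T \circ \id : C_\TTT(X) \to C_p^0(\w)$ is continuous and sends $D$ to the same non-precompact set $T(D)$. Applying the equivalence in Theorem~\ref{t:GP-characterization} in the opposite direction to $C_\TTT(X)$ then gives that $C_\TTT(X)$ is Gelfand--Phillips, and the ``in particular'' clause follows by the choice $\tau = \TTT_p$, $\TTT = \TTT_k$.

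I expect the only genuinely delicate step to be the transfer of a barrel-separated set from the finer topology $\TTT$ down to the coarser topology $\tau$: a $\TTT$-barrel need not itself be a $\tau$-barrel (since $\TTT$-closedness is weaker than $\tau$-closedness), so one cannot compare the witnessing barrels directly and must instead route the argument through the common barrel topology supplied by Proposition~\ref{p:bb-Ck}(iii). Once that identification is secured the remainder is a routine composition with the continuous identity, and I would take care only to confirm that barrel-boundedness, being a property of $C_b(X)$ alone, is genuinely unaffected by the passage between $\tau$ and $\TTT$.
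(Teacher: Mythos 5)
Your proof is correct and follows essentially the same route as the paper: both arguments transfer the witness of non-barrel-precompactness from $C_\TTT(X)$ down to $C_\tau(X)$ by finding a $\tau$-barrel inside the given $\TTT$-barrel via Proposition~\ref{p:bb-Ck}, and then pull the Gelfand--Phillips witness back along the continuous identity $C_\TTT(X)\to C_\tau(X)$. The only (cosmetic) differences are that you invoke clause (iv) of Theorem~\ref{t:GP-characterization} and part (iii) of Proposition~\ref{p:bb-Ck}, whereas the paper uses clause (ii) (a weak$^*$ null sequence $\{\mu_n\}$ with $\|\mu_n\|_B\not\to 0$, which remains weak$^*$ null in $C_\TTT(X)'$) and the explicit barrel $[A;\e]$ from part (i) of that proposition.
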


\begin{proof}
Fix any barrel-bounded set $B\subseteq C_{\TTT}(X)$, which is not barrel-precompact in $C_{\TTT}(X)$. The continuity of the identity operator $C_{\TTT}(X)\to C_{\tau}(X)$ implies that $B$ is barrel-bounded in $C_{\tau}(X)$.

We claim that $B$ is not barrel-precompact in $C_\tau(X)$. Indeed, since $B$  is not barrel-precompact in $C_\TTT(X)$, there exists a barrel $D$ in $C_\TTT(X)$ such that $B\nsubseteq F+D$ for every finite set $F\subseteq B$. By (i) of Proposition \ref{p:bb-Ck}, there are a functionally bounded subset $A$ of $X$ and $\e>0$ such that $[A;\e]\subseteq D$. It is clear that $[A;\e]$ is a barrel in $C_p(X)$ and hence also in $C_\tau(X)$. Since $[A;\e]\subseteq D$ it follows that $B\nsubseteq F+[A;\e]$ for every  finite set $F\subseteq B$, which means that $B$ is  not barrel-precompact in $C_\tau(X)$.

Since the lcs $C_\tau(X)$ is Gelfand--Phillips, there exists a weak$^\ast$ null sequence $\{\mu_n\}_{n\in\w}$ in $C_\tau(X)'$ such that $\|\mu_n\|_B\not\to 0$. The continuity of the identity operator $C_\TTT(X)\to C_\tau(X)$ ensures that the sequence $\{\mu_n\}_{n\in\w}$ remains weak$^\ast$ null in $C_\TTT(X)'$ and hence $C_\TTT(X)$ is Gelfand--Phillips.
\end{proof}

Theorems \ref{t:Cp-GP} and \ref{t:GP-between} immediately imply the next assertion.
\begin{corollary}\label{c:mu-Cp-Ck-GP}
Let $X$ be a $\mu$-space whose compact subsets are selectively sequentially pseudocompact $($for example $X$ is metrizable$)$. Then the function spaces $C_p(X)$ and $\CC(X)$ are Gelfand--Phillips.
\end{corollary}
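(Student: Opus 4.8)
The plan is to obtain the statement as a direct consequence of Theorems \ref{t:Cp-GP} and \ref{t:GP-between}; the only point requiring verification is that a $\mu$-space $X$ whose compact subsets are selectively sequentially pseudocompact satisfies the hypothesis of Theorem \ref{t:Cp-GP}.

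First I would fix a functionally bounded subset $K\subseteq X$ and put $K':=\cl(K)$. Since $X$ is a $\mu$-space, the closure of every functionally bounded set is compact, so $K'$ is a compact set with $K\subseteq K'$; by hypothesis $K'$ is selectively sequentially pseudocompact as a space. It then remains to verify that $X$ is selectively sequentially pseudocompact \emph{at} $K'$, which is exactly the condition required by Theorem \ref{t:Cp-GP}.

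The key (and only nonroutine) step is this passage from ``$K'$ is selectively sequentially pseudocompact'' to ``$X$ is selectively sequentially pseudocompact at $K'$'', the subtlety being that the defining open sets live in $X$ while the pseudocompactness of $K'$ refers to open sets of $K'$ and convergence inside $K'$. Given open sets $U_n\subseteq X$, $n\in\w$, each meeting $K'$, I would pass to the traces $V_n:=U_n\cap K'$, which form a sequence of nonempty open subsets of $K'$. Selective sequential pseudocompactness of $K'$ supplies a sequence $(x_n)_{n\in\w}\in\prod_{n\in\w}V_n$ with a subsequence converging in $K'$. Since $V_n\subseteq U_n$ we have $(x_n)_{n\in\w}\in\prod_{n\in\w}U_n$, and as $K'$ carries the subspace topology the convergent subsequence converges in $X$ as well. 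Thus $X$ is selectively sequentially pseudocompact at $K'$, and Theorem \ref{t:Cp-GP} yields that $C_p(X)$ is Gelfand--Phillips. Applying Theorem \ref{t:GP-between} with $\tau=\TTT_p$ and $\TTT=\TTT_k$ then gives that $\CC(X)$ is Gelfand--Phillips too.

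Finally, for the parenthetical metrizable case I would recall the two standard facts that make metrizable spaces fit the hypotheses: every metrizable space is a $\mu$-space, since a subset of a metrizable space is functionally bounded if and only if its closure is compact; and every compact subset of a metrizable space is compact metrizable, hence sequentially compact, hence selectively sequentially pseudocompact by the remark following the definition of this notion. I do not anticipate any genuine difficulty beyond the subspace-to-ambient transfer described above.
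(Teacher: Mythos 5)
Your proposal is correct and follows exactly the route the paper intends: the paper derives this corollary as an immediate consequence of Theorems \ref{t:Cp-GP} and \ref{t:GP-between}, and your verification of the hypothesis of Theorem \ref{t:Cp-GP} (taking $K'=\cl(K)$ compact via the $\mu$-space property and transferring selective sequential pseudocompactness of the subspace $K'$ to selective sequential pseudocompactness of $X$ at $K'$) is precisely the routine step left implicit there, and is the same observation the paper itself records in the proof of Corollary \ref{c:GP-sufficient}(iii).
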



Under an additional assumption that the space $C_b(X)$ is barrelled we can strengthen Theorem \ref{t:GP-between} as follows.

\begin{theorem} \label{t:GP-between-b}
Let $X$ be a Tychonoff space such that the space $C_b(X)$ is barrelled, and let $\tau$ and $\TTT$ be locally convex topologies on $C(X)$ such that $\TTT_p\subseteq\tau\subseteq\TTT\subseteq\TTT_b$. If the space $C_{\tau}(X)$ is Gelfand--Phillips, then so is $C_{\TTT}(X)$; in particular, $C_b(X)$ is Gelfand--Phillips. 
\end{theorem}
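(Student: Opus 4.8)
The plan is to follow the proof of Theorem~\ref{t:GP-between} almost verbatim, using the equivalence (i)$\Leftrightarrow$(ii) of Theorem~\ref{t:GP-characterization}: to show that $C_\TTT(X)$ is Gelfand--Phillips it suffices to produce, for every barrel-bounded set $B\subseteq C_\TTT(X)$ that is not barrel-precompact, a weak$^\ast$ null sequence $\{\mu_n\}_{n\in\w}$ in $C_\TTT(X)'$ with $\|\mu_n\|_B\not\to0$. The single new ingredient---and the only place where the hypothesis that $C_b(X)$ is barrelled is used---is an extension of Proposition~\ref{p:bb-Ck}(i) from the range $\TTT\subseteq\TTT_k$ to the wider range $\TTT\subseteq\TTT_b$.

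First I would prove this extension: \emph{for every barrel $D$ in $C_\TTT(X)$ there are a functionally bounded set $A\subseteq X$ and $\e>0$ with $[A;\e]\subseteq D$.} The set $D$ is absolutely convex, absorbing and $\TTT$-closed; since $\TTT\subseteq\TTT_b$, it is also $\TTT_b$-closed, and hence $D$ is a barrel in $C_b(X)=C_{\TTT_b}(X)$. Because $C_b(X)$ is barrelled, $D$ is a neighborhood of zero in $C_b(X)$, and as the sets $[A;\e]$ with $A$ functionally bounded form a base of neighborhoods of zero for $\TTT_b=\beta\big(C(X),C_p(X)'\big)$ (by Proposition~\ref{p:bb-Ck}(i),(iii)), some such $[A;\e]$ is contained in $D$. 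I expect this step to be the crux of the argument; the remainder is a transcription of the proof of Theorem~\ref{t:GP-between}.

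With the extension available, fix a barrel-bounded set $B\subseteq C_\TTT(X)$ that is not barrel-precompact. The identity operator $C_\TTT(X)\to C_\tau(X)$ is continuous (as $\tau\subseteq\TTT$) and therefore carries barrel-bounded sets to barrel-bounded sets, so $B$ is barrel-bounded in $C_\tau(X)$. Choosing a barrel $D$ in $C_\TTT(X)$ with $B\nsubseteq F+D$ for every finite $F$ and applying the extension, we obtain $[A;\e]\subseteq D$; since $[A;\e]$ is already a barrel in $C_p(X)$, hence in $C_\tau(X)$, and $B\nsubseteq F+[A;\e]$ for every finite $F$, the set $B$ is not barrel-precompact in $C_\tau(X)$ either. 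As $C_\tau(X)$ is Gelfand--Phillips, Theorem~\ref{t:GP-characterization} supplies a weak$^\ast$ null sequence $\{\mu_n\}_{n\in\w}$ in $C_\tau(X)'$ with $\|\mu_n\|_B\not\to0$; the continuity of $C_\TTT(X)\to C_\tau(X)$ gives $C_\tau(X)'\subseteq C_\TTT(X)'$ and keeps the sequence weak$^\ast$ null there, so $C_\TTT(X)$ is Gelfand--Phillips by Theorem~\ref{t:GP-characterization}. The ``in particular'' clause is then the instance $\TTT:=\TTT_b$ (admissible, since $\tau\subseteq\TTT_b\subseteq\TTT_b$), which yields that $C_{\TTT_b}(X)=C_b(X)$ is Gelfand--Phillips.
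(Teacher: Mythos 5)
Your proposal is correct and follows essentially the same route as the paper: the paper likewise reduces to the claim that $B$ fails to be barrel-precompact in $C_\tau(X)$ by observing that the barrel $D$ of $C_\TTT(X)$ is a barrel in $C_b(X)$ (via continuity of $C_b(X)\to C_\TTT(X)$), hence a neighborhood of zero by barrelledness, hence contains some $[A;\e]$, and then transports the weak$^\ast$ null sequence exactly as you do. Your isolation of the extension of Proposition~\ref{p:bb-Ck}(i) as the crux is precisely where the paper also uses the barrelledness hypothesis.
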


\begin{proof}
We give a detailed proof although it is similar to the proof of Theorem \ref{t:GP-between}.
Assume that $C_\tau(X)$ is Gelfand--Phillips.  To show that $C_\TTT(X)$ is Gelfand--Phillips, take any barrel-bounded set $B\subseteq C_\TTT(X)$ which is not barrel-precompact. The continuity of the identity operator $C_\TTT(X)\to C_\tau(X)$ implies that $B$ is barrel-bounded in $C_\tau(X)$.

We claim that $B$ is not barrel-precompact in $C_\tau(X)$. Indeed, since $B$  is not barrel-precompact in $C_\TTT(X)$, there exists a barrel $D$ in $C_\TTT(X)$ such that $B\nsubseteq F+D$ for every finite set $F\subseteq B$. The continuity of the operator $C_b(X)\to C_\TTT(X)$ implies that $B$ is not barrel-precompact in $C_b(X)$ and $D$ is a barrel in $C_b(X)$. Since $C_b(X)$ is barrelled, $D$ is a neighborhood of zero and hence there are a functionally bounded subset $A$ of $X$ and $\e>0$ such that $[A;\e]\subseteq D$.  It is clear that $[A;\e]$ is a barrel in $C_p(X)$ and hence also in $C_\tau(X)$. Since $[A;\e]\subseteq D$ it follows that $B\nsubseteq F+[A;\e]$ for every  finite set $F\subseteq B$, which means that $B$ is  not barrel-precompact in $C_\tau(X)$.

Since $C_\tau(X)$ is Gelfand--Phillips, there exists a weak$^\ast$ null sequence $\{\mu_n\}_{n\in\w}$ in $C_\tau(X)'$ such that $\|\mu_n\|_B\not\to 0$. By the continuity of  the identity operator $C_\TTT(X)\to C_\tau(X)$, the sequence $\{\mu_n\}_{n\in\w}$ remains weak$^\ast$ null in $C_\TTT(X)'$ and hence $C_\TTT(X)$ is Gelfand--Phillips.
%
\end{proof}

We denote by $\upsilon X$, $\mu X$ and $\beta X$ the Hewitt completion (=realcompactification), the Diedonn\'e completion and  the Stone-\v Cech compactification of $X$, respectively. It is known (\cite[8.5.8]{Eng}) that $X\subseteq \mu X\subseteq \upsilon X \subseteq \beta X$. Also it is known that all paracompact spaces and all realcompact spaces are Diedonn\'e complete and each Diedonn\'e complete space is a $\mu$-space, see \cite[8.5.13]{Eng}. On the other hand, each pseudocompact $\mu$-space is compact.

Although Theorem \ref{t:GP-between-b} is quite general, an answer to the following question is open.

\begin{problem}
Let $X$ be a Tychonoff space such that the space $\CC(X)$ is Gelfand--Phillips. Are the spaces $\CC(\upsilon X)$, $\CC(\mu X)$ or $C_b(X)$ Gelfand--Phillips?
\end{problem}

The aforementioned results suggest the following intriguing problem.

\begin{problem}
{\rm(i)} Is there a compact space $K$ such that the Banach space $C(K)$ is Gelfand--Phillips but the function space $C_p(K)$ is not Gelfand--Phillips?

{\rm(ii)} Is there a pseudocompact space $K$  such that the Banach space $C(K)$ is Gelfand--Phillips but the function space $\CC(K)$ is not Gelfand--Phillips?

{\rm(iii)} Is there a Tychonoff space $X$  such that $\CC(X)$ is Gelfand--Phillips but the function space $C_p(K)$ is not Gelfand--Phillips?
\end{problem}

\begin{proposition}\label{p:GP-image}
Let $f:L\to K$ be a surjective map between two pseudocompact spaces. If the function space $C_p(L)$ \textup{(}resp. $C(L)$\textup{)} is Gelfand--Phillips, then also the space $C_p(K)$ \textup{(}resp. $C(K)$\textup{)} is Gelfand--Phillips.
\end{proposition}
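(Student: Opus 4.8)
The plan is to realize $C_p(K)$ as a $\beta$-embedded subspace of $C_p(L)$ (and $C(K)$ as a closed subspace of the Banach space $C(L)$) by means of the composition operator induced by $f$, and then to invoke Corollary~\ref{c:GP-subspace}(i). First I would introduce the map $\Phi\colon C(K)\to C(L)$, $\Phi(g):=g\circ f$, which is well defined and linear since $f$ is a continuous surjection. Surjectivity of $f$ makes $\Phi$ injective (if $g_1\circ f=g_2\circ f$ then $g_1(y)=g_2(y)$ for every $y=f(x)\in K$) and gives $\|\Phi(g)\|_\infty=\sup_{x\in L}|g(f(x))|=\sup_{y\in K}|g(y)|=\|g\|_\infty$, so $\Phi$ is a linear isometry onto its image $X:=\Phi(C(K))$. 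The same surjectivity shows that $\Phi$ is a topological embedding of $C_p(K)$ into $C_p(L)$: the pointwise seminorm at a point $x\in L$ pulls back under $\Phi$ to the pointwise seminorm at $f(x)\in K$, and as $x$ runs over $L$ the points $f(x)$ exhaust $K$, so the topology $X$ inherits from $C_p(L)$ is exactly the pointwise topology transported from $C_p(K)$. Hence $X$, with the subspace topology from $C_p(L)$, is linearly homeomorphic to $C_p(K)$.

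Next I would verify that $X$ is $\beta$-embedded in $C_p(L)$ using Proposition~\ref{p:beta-emb}(iii). Because $L$ is pseudocompact, $L$ is functionally bounded in itself, so by Proposition~\ref{p:bb-Ck}(iii) the $\beta$-modification $\big(C_p(L)\big)_\beta=C_b(L)$ is precisely $C(L)$ with the sup-norm, a Banach space; thus $C_p(L)$ is $\beta$-Banach. The same argument for the pseudocompact space $K$ identifies $\big(C_p(K)\big)_\beta=C_b(K)$ with the Banach space $(C(K),\|\cdot\|_\infty)$, and since $X\cong C_p(K)$ as locally convex spaces, $X$ is $\beta$-Banach with $X_\beta\cong(C(K),\|\cdot\|_\infty)$. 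Finally, $X_\beta$ is closed in $\big(C_p(L)\big)_\beta=(C(L),\|\cdot\|_\infty)$: under the above identifications $X_\beta$ is the set $\Phi(C(K))$, and being the isometric image of the complete space $(C(K),\|\cdot\|_\infty)$ it is complete, hence closed in $(C(L),\|\cdot\|_\infty)$. So all hypotheses of Proposition~\ref{p:beta-emb}(iii) hold and $X$ is $\beta$-embedded in $C_p(L)$.

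The conclusion is then immediate. If $C_p(L)$ is Gelfand--Phillips, then by Corollary~\ref{c:GP-subspace}(i) its $\beta$-embedded subspace $X$ is Gelfand--Phillips, and since $C_p(K)\cong X$, the space $C_p(K)$ is Gelfand--Phillips. The Banach-space case is entirely analogous and slightly simpler: $\Phi$ is an isometric embedding of $C(K)$ onto the closed subspace $X$ of the Banach space $C(L)$; as a closed subspace of a Banach space, $X$ is itself a Banach space, hence barrelled, so it is $\beta$-embedded by Proposition~\ref{p:beta-emb}(ii), and Corollary~\ref{c:GP-subspace}(i) yields that $X\cong C(K)$ is Gelfand--Phillips whenever $C(L)$ is.

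The step that I expect to require the most care is the verification of $\beta$-embedding, namely identifying $\big(C_p(L)\big)_\beta$ with the sup-norm Banach space $C(L)$ (which is exactly where pseudocompactness of $L$ enters, via Proposition~\ref{p:bb-Ck}(iii)) and checking that the image $X$ is closed there. The surjectivity of $f$ is used in two essential places: to make $\Phi$ injective and to make it isometric; without it $\Phi$ need be neither, and the argument collapses.
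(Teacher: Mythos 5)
Your proposal is correct and follows essentially the same route as the paper: both realize $C(K)$ (resp.\ $C_p(K)$) as the image of the composition operator $g\mapsto g\circ f$, which is an isomorphic embedding because $f$ is a continuous surjection, and both then invoke Proposition~\ref{p:beta-emb} together with Proposition~\ref{p:bb-Ck}(iii) (where pseudocompactness enters) to see that this image is $\beta$-embedded, so that Corollary~\ref{c:GP-subspace}(i) applies. The only cosmetic difference is that you verify the $\beta$-embedding in the $C_p$ case explicitly via Proposition~\ref{p:beta-emb}(iii), spelling out the closedness of the image in $(C(L),\|\cdot\|_\infty)$, where the paper leaves this step implicit.
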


\begin{proof}
It follows that the dual operators $T_p:C_p(K)\to C_p(L)$ and $T:C(K)\to C(L)$ assigning to each continuous function $\varphi:K\to \IF$ the composition $\varphi\circ f:L\to\IF$ are isomorphic topological embeddings. The Banach subspace $T(C(K))$ of $C(L)$ is $\beta$-embedded into $C(L)$ by Proposition~\ref{p:beta-emb}(ii). If the Banach space $C(L)$ is Gelfand--Phillips, then, by  Corollary~\ref{c:GP-subspace}(i),  the space $T(C(K))$ is Gelfand--Phillips and so is its isomorphic copy $C(K)$.

Now assume that the function space $C_p(L)$ is Gelfand--Phillips.  By (iii) of Proposition \ref{p:bb-Ck}, $(C_p(K))_\beta=C(K)$ and $(C_p(L))_\beta=C(L)$, which implies that the subspace $T_p(C_p(K))$ is $\beta$-embedded into $C_p(L)$. By Corollary~\ref{c:GP-subspace}, the space $T_p(C_p(K))$ is Gelfand--Phillips and so is its isomorphic copy $C_p(K)$.
\end{proof}

In fact, the selective sequential pseudocompactness of the compact space $K$ in Corollary~\ref{c:GP-sspC} can be replaced by the selective sequential  precompactness of the space $P_\w(K)$ of finitely supported probability measures on $K$ in $C(K)'_{w^\ast}$.

For a Tychonoff space $X$ by a {\em finitely supported probability measure} on $X$ we understand an element of the convex hull $P_\w(X)$ of the set $\{\delta_x:x\in X\}$ of Dirac measures in the dual space $C_p(X)'_{w^\ast}$. The {\em Dirac measure} $\delta_x$ concentrated at a point $x\in X$ is the linear continuous functional $\delta_x:C_p(X)\to\IF$, $\delta_x:f\mapsto f(x)$. For a subset $K\subseteq X$, let $P_\w(K)$ be the convex hull of the set $\{\delta_x:x\in K\}\subseteq C_p(X)'_{w^\ast}$ of Dirac measures concentrated at points of the set $K$.

\begin{proposition}\label{p:GP-P(K)w}
For a Tychonoff space $X$, the locally convex space $E=C_p(X)$ is Gelfand--Phillips whenever for every closed functionally bounded set $K$ in $X$ the space $E'_{w^\ast}$ is selectively sequentially precompact at $P_\w(K)$.
\end{proposition}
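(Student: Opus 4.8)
The plan is to verify the hypothesis of Theorem~\ref{t:GP-characterization}(v), which characterizes the Gelfand--Phillips property via the existence of a suitable selectively sequentially precompact witnessing set in $E'_{w^\ast}$. So fix an infinite barrel-bounded barrel-separated subset $A\subseteq E=C_p(X)$, and choose a barrel $B\subseteq E$ such that $A$ is $B$-separated. By (i) of Proposition~\ref{p:bb-Ck}, there exist a closed functionally bounded set $K\subseteq X$ and $\e>0$ with $[K;\e]=\{f\in C_p(X):\|f\|_K\le\e\}\subseteq B$. Consequently, for any two distinct $a,a'\in A$ we have $a-a'\notin B\subseteq [K;\e]$ fails, so $\sup_{x\in K}|a(x)-a'(x)|>\e$.

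First I would set up the witnessing set inside $E'_{w^\ast}$. Exactly as in the proof of Theorem~\ref{t:Cp-GP}, I take $S:=\tfrac{1}{\e}P_\w(K)\subseteq E'_{w^\ast}$. By the hypothesis of the proposition, $E'_{w^\ast}$ is selectively sequentially precompact at $P_\w(K)$, and since selective sequential precompactness is preserved under the homothety $\chi\mapsto\tfrac{1}{\e}\chi$ (a topological automorphism of the topological vector space $E'_{w^\ast}$), the space $E'_{w^\ast}$ is also selectively sequentially precompact at $S$. This is the place where I use the measure-space hypothesis rather than a pseudocompactness assumption on $X$ itself: the set of finitely supported probability measures is exactly what lets the separation be detected by a selectively sequentially precompact family.

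Next I would verify the separation condition required in (v), namely $\sup_{s\in S}|s(x-x')|>1$ for distinct $x,x'\in A$. Given such $x,x'$, we have shown $\sup_{t\in K}|x(t)-x'(t)|>\e$, so there is a point $t\in K$ with $|x(t)-x'(t)|>\e$. The Dirac measure $\delta_t\in P_\w(K)$ then gives $\tfrac{1}{\e}\delta_t\in S$ with
\[
\sup_{s\in S}|s(x-x')|\ge\tfrac{1}{\e}|\delta_t(x-x')|=\tfrac{1}{\e}|x(t)-x'(t)|>\tfrac{1}{\e}\cdot\e=1.
\]
With $X:=A$ playing the role of the infinite subset and $S$ the witnessing set, all hypotheses of Theorem~\ref{t:GP-characterization}(v) are met, and the implication (v)$\Rightarrow$(i) yields that $E=C_p(X)$ is Gelfand--Phillips.

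The argument is essentially a refinement of the proof of Theorem~\ref{t:Cp-GP}, so I do not expect a serious obstacle; the one point requiring care is confirming that selective sequential precompactness at $P_\w(K)$ transfers to the homothetic copy $S=\tfrac{1}{\e}P_\w(K)$. Since any nonzero scalar multiplication is a linear homeomorphism of $E'_{w^\ast}$ carrying $P_\w(K)$ onto $S$ and carrying the relevant open sets onto open sets, a Cauchy subsequence selected for $S$ pulls back to a Cauchy subsequence for $P_\w(K)$ and conversely; this is routine but should be stated explicitly. The contrast with Theorem~\ref{t:Cp-GP} is that there one used selective sequential pseudocompactness of $X$ at a set $K'$ and pushed it forward through the embedding $\delta:X\to E'_{w^\ast}$, whereas here the hypothesis is directly about the measure set $P_\w(K)$, so no pushforward through $\delta$ is needed and the Dirac measures alone already realize the separation.
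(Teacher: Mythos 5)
Your proof is correct and follows essentially the same route as the paper's: both verify condition (v) of Theorem~\ref{t:GP-characterization} by extracting $K$ and $\e$ from Proposition~\ref{p:bb-Ck}(i) and taking $S=\tfrac{1}{\e}P_\w(K)$ as the witnessing set, with the separation realized by Dirac measures. The only cosmetic issue is your reuse of the letter $X$ for the subset $A$, which clashes with the underlying Tychonoff space $X$.
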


\begin{proof}
In order to apply Theorem~\ref{t:GP-characterization} to the locally convex space $E=C_p(X)$, take any infinite barrel-bounded, barrel-separated subset $A$ of $E$. Find a barrel $B\subseteq E$ such that $A$ is $B$-separated. By (i) of Proposition \ref{p:bb-Ck}, there exist a closed functionally bounded set  $K\subseteq X$ and a real number $\e>0$ such that $\{f\in C_p(X):\|f\|_K\le \e\}\subseteq B$. Assuming that $E'_{w^\ast}$ is  selectively sequentially precompact at the set $P_\w(K)$, we conclude that $E'_{w^\ast}$ is also selectively sequentially precompact at the set $S=\frac{1}{\e} P_\w(K)$. Since $A$ is $B$-separated, it follows that for any distinct points $a,b\in A$, we have $a-b\notin B$ and hence $\|a-b\|_K>\e$, which implies
\[
\sup_{s\in S}|s(a-b)|=\tfrac{1}{\e}\sup_{\mu\in P_\w(K)}|\mu(a-b)|\ge\tfrac{1}{\e}\sup_{x\in K}|a(x)-b(x)|=\tfrac{1}{\e}\|a-b\|_K>1.
\]
By Theorem~\ref{t:GP-characterization}, the locally convex space $C_p(X)$ is Gelfand--Phillips.
%
\end{proof}




For a compact space $K$, the space
\[
P(K):=\{\mu\in C(X)':\|\mu\|=\mu(\mathbf{1}_X)=1\}
\]
endowed with the weak$^\ast$ topology, is called the {\em space of probability measures} on $K$. By the Riesz Representation Theorem \cite[7.6.1]{Jar}, each functional $\mu\in P(K)$ can be identified with a regular probability Borel measure on $K$. Each point $x\in K$ can be identified with the Dirac measure $\delta_x\in C(K)'$, $\delta_x:f\mapsto f(x)$. It is well known that the map $\delta:K\to P(K)$, $\delta:x\mapsto \delta_x$, is a topological embedding and the convex hull $P_\w(K)$ of the set $\delta(K)\subseteq P(K)$ is dense in $P(K)$. 

\begin{corollary} \label{c:GP-P(K)}
Let $K$ be an infinite compact space. Then:
\begin{enumerate}
\item[{\rm (i)}]  if $P(K)$ is selectively sequentially pseudocompact, then $C_p(P(K))$, $C(P(K))$ and $C(K)$ are Gelfand--Phillips;
\item[{\rm (ii)}] if $P(K)$ is selectively sequentially pseudocompact at $P_\w(K)$, then the spaces $C_p(K)$ and $C(K)$ are Gelfand--Phillips.
\end{enumerate}
\end{corollary}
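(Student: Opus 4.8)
The plan is to derive both assertions from the earlier machinery, primarily Corollary~\ref{c:GP-sspC}, Proposition~\ref{p:GP-P(K)w}, and Proposition~\ref{p:GP-image}, so that very little genuinely new work is required. For part~(i), the hypothesis is that $P(K)$ is selectively sequentially pseudocompact. First I would apply Corollary~\ref{c:GP-sspC} directly to the compact space $P(K)$: since $P(K)$ is compact and selectively sequentially pseudocompact, that corollary immediately yields that $C_p(P(K))$ and $C(P(K))$ are Gelfand--Phillips. To obtain the Gelfand--Phillips property for $C(K)$, I would exploit the topological embedding $\delta:K\to P(K)$, which identifies $K$ with the closed subset $\delta(K)\subseteq P(K)$. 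Since $\delta(K)$ is a retract-free closed copy of $K$ inside the compact space $P(K)$, and $K$ is a continuous image of $P(K)$ (via a barycenter-type surjection, or more simply one uses that $K$ embeds as $\delta(K)$ and a suitable surjection $P(K)\to K$ exists because $\delta(K)$ is a retract of $P(K)$ only when $K$ is nice), the cleanest route is Proposition~\ref{p:GP-image}: a surjection $f:P(K)\to K$ between pseudocompact (here compact) spaces transports the Gelfand--Phillips property of $C(P(K))$ to $C(K)$. Thus $C(K)$ is Gelfand--Phillips as well.

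For part~(ii), the hypothesis is weaker: $P(K)$ is only assumed selectively sequentially pseudocompact \emph{at} the dense convex subset $P_\w(K)$ of finitely supported probability measures. Here I would invoke Proposition~\ref{p:GP-P(K)w}. Because $K$ is compact, the only closed functionally bounded set that matters is $K$ itself (every subset of a compact space is functionally bounded, and $K$ is closed), so the hypothesis of Proposition~\ref{p:GP-P(K)w} reduces to requiring that $E'_{w^\ast}=C_p(K)'_{w^\ast}$ be selectively sequentially precompact at $P_\w(K)$. The identification of $P_\w(K)\subseteq P(K)\subseteq C(K)'\subseteq C_p(K)'_{w^\ast}$ shows that selective sequential pseudocompactness of $P(K)$ at $P_\w(K)$ implies selective sequential pseudocompactness of $C_p(K)'_{w^\ast}$ at $P_\w(K)$, and hence selective sequential precompactness at $P_\w(K)$ (since pseudocompactness-at implies precompactness-at, as noted after Definition of those notions). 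Proposition~\ref{p:GP-P(K)w} then gives that $C_p(K)$ is Gelfand--Phillips. Finally, $C(K)$ is Gelfand--Phillips either by the same Proposition~\ref{p:GP-image} argument as above, or more directly because $(C_p(K))_\beta=C(K)$ by Proposition~\ref{p:bb-Ck}(iii), so Corollary~\ref{c:GP-subspace}(iii) promotes the Gelfand--Phillips property from $C_p(K)$ to its barrel-topology version $C(K)$.

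The step I expect to require the most care is the passage from the property on $P(K)$ (or on $C_p(K)$) down to $C(K)$ in part~(i), specifically producing the surjection $P(K)\to K$ needed to apply Proposition~\ref{p:GP-image}. The natural candidate is the evaluation/barycenter map, but that requires $K$ to be convex or at least to admit barycenters, which a general compact $K$ does not. The correct and robust argument instead uses the embedding $\delta(K)\subseteq P(K)$ together with the fact that the dual restriction operator $C(P(K))\to C(\delta(K))\cong C(K)$ is a surjective continuous linear operator whose existence follows from Tietze--Urysohn extension; combined with Proposition~\ref{p:GP-image} applied to a continuous surjection $P(K)\twoheadrightarrow K$ (which exists since $K=\delta(K)$ is a closed subspace of the compact space $P(K)$ and the quotient collapsing $P(K)$ onto $\delta(K)$ is continuous), this yields the claim. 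I would therefore state this reduction carefully, making explicit that compactness guarantees both the pseudocompactness needed for Proposition~\ref{p:GP-image} and the existence of the required surjection, so that no barycentric structure on $K$ is invoked.
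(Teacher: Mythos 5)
Your treatment of $C_p(P(K))$ and $C(P(K))$ in part (i), and essentially all of part (ii), matches the paper's route (the paper finishes (ii) with Theorem~\ref{t:GP-between} rather than Corollary~\ref{c:GP-subspace}(iii), but your variant via $(C_p(K))_\beta=C(K)$ is equally valid). However, your argument for the third claim of part (i) --- that $C(K)$ is Gelfand--Phillips --- has a genuine gap. You propose to apply Proposition~\ref{p:GP-image} to a continuous surjection $P(K)\twoheadrightarrow K$, obtained by ``collapsing $P(K)$ onto $\delta(K)$.'' No such map exists in general: a continuous image (and in particular a retract) of the connected space $P(K)$ (which is convex in $C(K)'_{w^\ast}$) must be connected, so for any disconnected compact $K$ --- e.g.\ $K=\beta\w$, precisely the kind of space for which these results are of interest --- there is \emph{no} continuous surjection $P(K)\to K$ whatsoever. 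Your own hedging (``only when $K$ is nice'') signals the problem, but the argument as written relies on the nonexistent map. The mention of the restriction operator $C(P(K))\to C(\delta(K))$ does not repair this, since surjectivity of a linear operator is not one of the hypotheses under which the paper transfers the Gelfand--Phillips property.

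The paper itself simply cites Corollary~2.2(vi) of \cite{BG-GP-Banach} for this claim. If you want a self-contained fix using only the tools of this paper, the correct device is the one used in the example immediately after the corollary: the operator $T:C(K)\to C(P(K))$, $Tf:\mu\mapsto\mu(f)$, is an isomorphic (indeed isometric) embedding of Banach spaces, so $C(K)$ is a barrelled subspace of the Gelfand--Phillips space $C(P(K))$ and Corollary~\ref{c:GP-subspace}(ii) applies. Two smaller points: in part (ii) your inclusion chain $P(K)\subseteq C(K)'\subseteq C_p(K)'_{w^\ast}$ is backwards ($C_p(K)'$ consists only of finitely supported measures, a proper subspace of $C(K)'$), although the substantive assertion you need --- that selective sequential pseudocompactness of $P(K)$ at $P_\w(K)$ yields selective sequential precompactness of $C_p(K)'_{w^\ast}$ at $P_\w(K)$ --- is exactly what the paper asserts at the same step; and your reduction to the single functionally bounded set $K$ in Proposition~\ref{p:GP-P(K)w} is fine, since selective sequential precompactness at a set passes to all of its subsets and $P_\w(K')\subseteq P_\w(K)$ for closed $K'\subseteq K$.
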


\begin{proof}
(i) By Corollary \ref{c:GP-sspC}, the spaces $C_p(P(K))$ and  $C(P(K))$ are Gelfand--Phillips. The space $C(K)$  is Gelfand--Phillips by Corollary 2.2(vi) of \cite{BG-GP-Banach}.

(ii)  If $P(K)$ is selectively sequentially pseudocompact at $P_\w(K)$, then also $E'_{w^\ast}$ is selectively sequentially precompact at $P_\w(K)$. Therefore, by Proposition \ref{p:GP-P(K)w}, the space $C_p(K)$  Gelfand--Philipps and so is the Banach space $C(K)=C_k(K)$, see Theorem \ref{t:GP-between}.
\end{proof}

The path-connected space $P(K)$ can fail to be selectively sequentially pseudocompact as the following example shows.

\begin{example}
Let $K$ be an infinite compact $F$-space. Then the function spaces $C_p(P(K))$ and $C(P(K))$ are not Gelfand--Phillips. Consequently, the compact space $P(K)$ is not selectively sequentially pseudocompact.
\end{example}

\begin{proof}
Observe that the operator $T:C(K)\to C(P(K))$ assigning to each function $f\in C(K)$ the function $Tf:P(K)\to \IF$, $Tf:\mu\mapsto \mu(f)$, is an isomorphic embedding of the Banach space $C(K)$ into the Banach space $C(P(K))$. Assuming that the Banach space $C(P(K))$ is Gelfand--Phillips, we can apply Corollary~\ref{c:GP-subspace}(ii) and conclude that also the Banach space $C(K)$ is Gelfand--Phillips. But this contradicts Example~\ref{exa:F-space-eJNP}. This contradiction shows that the Banach space $C(P(K))$ is not Gelfand--Phillips.

Now Theorem \ref{t:GP-between} implies the function space $C_p(P(K))$ is not Gelfand--Phillips, and hence, by Corollary~\ref{c:GP-sspC}, the compact space $P(K)$ is not selectively sequentially pseudocompact.
\end{proof}

By Corollary \ref{c:GP-sspC}, if a compact space $K$ is selectively sequentially pseudocompact, then the space $C_p(K)$ is Gelfand--Phillips. So it is natural to ask whether the converse is true, namely, let $K$ be a compact space such that the space $C_p(K)$ is Gelfand--Phillips. {\em Is then $K$ selectively sequentially pseudocompact$?$} Since every selectively sequentially pseudocompact space contains non-trivial convergent sequences, one can also ask a much weaker question: {\em Does $K$ contain a non-trivial convergent sequence}? Under an additional Set-Theoretic assumption (namely, the Jensen Diamond Principle $\diamondsuit$) we answer this question in the negative.
Recall that a compact space $X$ is called {\em Efimov} if $X$ contains neither a non-trivial convergent sequence nor topological copy of $\beta\w$.



\begin{example}\label{t:diamond}
Under $\diamondsuit$, there exists a Efimov  space $X$ whose function space $C_p(X)$ is Gelfand--Phillips.
\end{example}

\begin{proof}
Under $\diamondsuit$,  we constructed in \cite{BG-Efimov} a simple Efimov  space $X$ such that $P(X)$ is selectively sequentially pseudocompact. By Corollary~\ref{c:GP-P(K)}, the function space $C_p(X)$ is Gelfand--Phillips.
%
\end{proof}

\end{document}